\newtheorem{theorem}{Theorem}[section]
\newtheorem{prop}[theorem]{Proposition}
\newtheorem{claim}[theorem]{Claim}
\newtheorem{fact}[theorem]{Fact}
\newtheorem{cor}[theorem]{Corollary}
\newtheorem{lemma}[theorem]{Lemma}
\newtheorem{question}[theorem]{Question}
\theoremstyle{definition}
\newtheorem{defn}[theorem]{Definition}
\newtheorem{example}[theorem]{Example}
\theoremstyle{remark}
\newtheorem{remark}[theorem]{Remark}
\newcommand{\WM}{\widetilde{\cal M}}
\newcommand{\la}{\langle}
\newcommand{\ra}{\rangle}
\newcommand{\CM}{{\cal M}}
\newcommand{\sub}{\subseteq}
\newcommand{\elsub}{\preccurlyeq}
\newcommand{\dcl}{\operatorname{dcl}}
\newcommand{\cl}{\operatorname{cl}}
\newcommand{\rk}{\ensuremath{\textup{rk}}}
\newcommand{\cal}[1]{\ensuremath{\mathcal{#1}}}
\newcommand{\Cal}[1]{\ensuremath{\mathcal{#1}}}
\newcommand{\Lrarr}{\ensuremath{\Leftrightarrow}}
\newcommand{\Rarr}{\ensuremath{\Rightarrow}}
\newcommand{\res}{\ensuremath{\upharpoonright}}
\newcommand{\es}{\ensuremath{\emptyset}}
\newcommand{\Aut}{\ensuremath{\textup{Aut}}}
\newcommand{\sm}{\setminus}
\newcommand{\Z}{\mathbb{Z}}
\newcommand{\N}{\mathbb{N}}
\newcommand{\Q}{\mathbb{Q}}
\newcommand{\R}{\mathbb{R}}
\title[Small sets in dense pairs]
{Small sets in dense pairs}
\subjclass[2010]{Primary 03C64,  Secondary 06F20}
\keywords{dense pairs, elimination of imaginaries,  small sets}
\date{\today}
\begin{document}

\author {Pantelis  E. Eleftheriou}

\address{Department of Mathematics and Statistics, University of Konstanz, Box 216, 78457 Konstanz, Germany}

\email{panteleimon.eleftheriou@uni-konstanz.de}

\thanks{Research supported by an Independent Research Grant from the German Research Foundation (DFG) and a Zukunftskolleg Research Fellowship.}

\begin{abstract}
Let $\widetilde{\cal M}=\la \cal M, P\ra$ be an expansion of an  o-minimal structure $\cal M$ by a dense set $P\sub M$, such that three tameness conditions hold. We prove that the induced structure on $P$ by $\cal M$ eliminates imaginaries. As a corollary, we obtain that every small set $X$ definable in  $\WM$ can be definably embedded into some $P^l$, uniformly in parameters, settling a question from \cite{egh}. We  verify the tameness conditions in three examples: dense pairs of real closed fields, expansions of $\cal M$ by a dense independent set, and expansions by a dense divisible multiplicative  group with the Mann property. Along the way, we point out a gap in the proof of a relevant  elimination of imaginaries result  in Wencel \cite{wen}. The above results are in contrast to  recent literature, as it is known  in general that $\WM$ does not eliminate imaginaries, and neither it nor the induced structure on $P$ admits definable Skolem functions.
 \end{abstract}
 \maketitle

\section{Introduction}
Elimination of imaginaries is a classical theme in model theory. If a structure eliminates imaginaries, then quotients of definable sets by definable equivalent relations can be treated as definable.

\begin{defn}\label{ei} A structure $\cal N$ \emph{eliminates imaginaries}  if for every $\emptyset$-definable equivalence relation $E$ on $N^n$, there is a $\emptyset$-definable map $f:N^n\to N^l$ such that for every $x, y\in N^n$,
$$E(x,y) \,\,\Lrarr\,\,f(x)=f(y).$$
In particular, $N^n/E$ is in bijection with the $\es$-definable set $\{f(a): a\in N^n\}$.
\end{defn}

We fix throughout this paper an o-minimal expansion $\cal M=\la M, <, +, 0, \dots\ra$ of an ordered group with a distinguished positive element $1$. We denote by $\cal L$ its language, and by $\dcl$ the usual definable closure operator in \cal M.  An `$\cal L$-definable' set is a set definable in $\cal M$ with parameters. We write `$\cal L_A$-definable' to specify that those parameters come from $A\sub M$.  It is a well-known fact that $\cal M$ admits definable Skolem functions and eliminates imaginaries (\cite[Chapter 6]{vdd-book}).

\begin{defn} Let $D, P\sub M$. The \emph{$D$-induced structure on $P$ by \cal M}, denoted by $P_{ind(D)}$, is a structure in the  language
$$\cal L_{ind(D)}=\{R_{\phi(x)}(x): \phi(x)\in \cal L_D\},$$
whose universe is $P$ and, for every tuple $a\sub P$,
 $$P_{ind(D)}\models R_\phi(a) \,\,\Lrarr\,\, \cal M\models \phi(a).$$
\end{defn}
For an extensive account on $P_{ind(D)}$, see \cite[Section 3.1.2]{simon-book}. \\

\textbf{For the rest of this paper we fix some $P\sub M$ and denote $\widetilde{\cal M}=\la \cal M, P\ra$.}  We let $\cal L(P)$ denote the language of $\WM$; namely, the language $\cal L$ augmented by a unary predicate symbol $P$. We denote by $\dcl_{\cal L(P)}$ the definable closure operator in $\WM$.  Unless stated otherwise, by `($A$-)definable' we mean ($A$-)definable in $\widetilde{\cal M}$, where $A\sub M$. We  use the letter $D$ to denote an arbitrary, but not fixed, subset of $M$.

Consider the following three properties for $\widetilde{\cal M}$ and $D$:

\begin{itemize}
  \item[(OP)] (Open definable sets are $\cal L$-definable.) For every set $A$ such that $A\setminus P$ is $\dcl$-independent over $P$, and for every $A$-definable set $V \subset M^n$, its topological closure $\overline V \subseteq M^{n}$ is $\cal L_A$-definable.\smallskip

  \item[(dcl)$_D$] Let $B, C\sub P$ and
$$A=\dcl(BD)\cap \dcl(CD)\cap P.$$
Then $$\dcl(AD)=\dcl(BD)\cap \dcl(CD).$$

\item[(ind)$_D$] Every $A$-definable set in $P_{ind(D)}$ is the trace of an $\cal L_{AD}$-definable set.
\end{itemize}

Properties (OP) and (ind)$_D$ already appear in the literature and are  known to hold for the examples  mentioned in Theorem C below (Fact \ref{inparticular}). Property (OP) is Assumption (III) from \cite{egh}, and a justification for its terminology is provided in \cite[Lemma 2.5]{egh}.
Property (dcl)$_D$ is introduced here and it is established for the examples of Theorem C  in Section \ref{sec-examples}.
The thrust of (dcl)$_D$ is that it concerns only definability in $\cal M$.

Our first result is the following:\vskip.3cm

\noindent\textbf{Theorem A.} {\em Assume \textup{(OP)}, \textup{(dcl)}$_D$ and \textup{(ind)$_D$}  hold for $\widetilde{\cal M}$ and  $D$, and that $D$ is $\dcl$-independent over $P$. Then $P_{ind(D)}$ eliminates imaginaries.}\vskip.3cm

Theorem A stands in contrast to the general intuition that in pairs with tame geometric behavior on the class of all definable sets,  `choice properties'  generally fail. Such pairs have been extensively studied in the literature, with the primary example being that of  a real closed field expanded by a dense real closed subfield, considered by A. Robinson \cite{rob}.  Further examples include arbitrary dense pairs \cite{vdd-dense} of o-minimal structures, and expansions of \cal M by a dense independent set \cite{dms2} or by a dense multiplicative group with the Mann Property \cite{dg}. It is known that a dense pair does not eliminate imaginaries, and neither it nor $P_{ind(D)}$ admits definable Skolem functions (\cite[Section 5]{dms} and \cite{ehk}). If $P$ is a dense independent set, then $\widetilde{\cal M}$ eliminates imaginaries but does not admit definable Skolem functions (\cite{dms2}).

In \cite{egh}, the above and further examples were all put under a common perspective and a program was initiated for understanding their definable sets in terms of $\cal L$-definable sets and `$P$-bound' sets. In particular, they were shown to satisfy (OP). An important application of Theorem A  is (Theorem B below)  that the study of $P$-bound sets can be further reduced to that of definable (in $\WM$) subsets of $P^l$. For the examples of Theorem C, those are definable in the induced structure on $P$, by \cite{dms2,vdd-dense, dg}.
This reduction is  the main motivation of the present work.

\begin{defn}[\cite{vdd-dense}]
A set $X\sub M^n$ is called \emph{$P$-bound over $A$} if there is an $\cal L_A$-definable function $h: M^m \to M^n$ such that $X \subseteq h(P^m)$.
\end{defn}

In the aforementioned examples, $P$-boundness amounts to a precise topological notion of smallness (\cite[Definition 2.1]{egh}), as well as to the classical notion of $P$-internality from geometric stability theory (\cite[Corollary 3.12]{egh}). Our application of Theorem A is the following.\vskip.3cm

\noindent\textbf{Theorem B.} {\em Assume \textup{(OP)}, \textup{(dcl)}$_D$ and \textup{(ind)$_D$} hold for $\widetilde{\cal M}$ and  $D$, and that $D$ is $\dcl$-independent over $P$. Let  $X\sub M^n$ be a $D$-definable set. If $X$ is $P$-bound over $D$, then there is a $D$-definable injective map $\tau:X\to P^l$.}\vskip.3cm

The core of this paper is  Section \ref{sec-examples}, where we establish (dcl)$_D$ in three main examples of tame expansions of o-minimal structures by dense sets, under an additional assumption on  $D$.\\

\noindent\textbf{Theorem C.} {\em Suppose $\WM=\la \cal M, P\ra$ is one of the following structures:
\begin{itemize}
  \item[(a)] a dense pair of real closed fields; that is, $\cal M$ is a real closed field and $P$  a dense elementary substructure of $\cal M$,
  \item[(b)] an expansion of $\cal M$ by a dense $\dcl$-independent set $P$,
  \item[(c)] an expansion of a real closed field $\cal M$ by a dense divisible subgroup $P$ of $\la M^{>0}, \cdot\ra$ with the Mann property.
      \end{itemize}
Suppose $D\sub M$ is $\dcl$-independent over $P$. Then \textup{(dcl)$_D$} holds. In particular, $P_{ind(D)}$ eliminates imaginaries.}\\

We show in Example \ref{noEI} that the assumption of $D$ being $\dcl$-independent over $P$ is necessary; namely, without it, $P_{ind(D)}$ need not eliminate imaginaries.
However, even for arbitrary $A$-definable sets, one still obtains the following corollary, which in particular applies to our examples.

\begin{cor}\label{corD} Assume \textup{(OP)}, \textup{(dcl)}$_D$ and \textup{(ind)$_D$} hold for $\widetilde{\cal M}$ and every $D\sub M$ which is $\dcl$-independent over $P$. Let $X\sub M^n$ be an $A$-definable set. If $X$ is $P$-bound over $A$,  then there is an $A\cup P$-definable injective map $\tau:X\to P^l$.
\end{cor}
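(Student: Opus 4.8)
The plan is to reduce the statement about an $A$-definable set to the statement of Theorem B by absorbing the parameters $A$ into a suitable $\dcl$-independent set $D$. First I would choose $D \subseteq M$ to be a maximal subset of $A$ that is $\dcl$-independent over $P$; equivalently, $D$ is such that $A \subseteq \dcl(DP)$ and $D$ is $\dcl$-independent over $P$. (Concretely: enumerate $A$ and greedily discard any element lying in the $\dcl$-closure of $P$ together with the previously retained elements; since $A$ is finite this terminates, and what remains is $\dcl$-independent over $P$ and has the same $\dcl$-closure over $P$ as $A$.) By hypothesis (OP), (dcl)$_D$ and (ind)$_D$ all hold for this particular $D$, since it is $\dcl$-independent over $P$.

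Next I would observe that, since $A \subseteq \dcl(DP)$, every $\cal L_A$-definable function is $\cal L_{DP}$-definable, and in fact each parameter in $A$ is $\cal L$-definable from finitely many elements $d_1,\dots,d_k \in D$ and $p_1,\dots,p_m \in P$. Consequently the $A$-definable set $X$ is $D \cup P$-definable — indeed $DP$-definable — and, because $X$ is $P$-bound over $A$, it is $P$-bound over $D \cup P$: if $h$ is $\cal L_A$-definable with $X \subseteq h(P^r)$, rewrite $h$ as $h'(\,\cdot\,, p_1,\dots,p_m)$ with $h'$ an $\cal L_D$-definable function, so that $X \subseteq h'(P^{r+m})$, exhibiting $X$ as $P$-bound over $D$ in the sense required, up to naming the extra $P$-coordinates. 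One should be slightly careful here: Theorem B asks for $X$ to be $D$-definable and $P$-bound over $D$, whereas here $X$ is only $DP$-definable. So I would instead apply Theorem B inside the structure $\WM$ after adding the finitely many elements $p_1,\dots,p_m$ of $P$ as constants to the language. Adding constants from $P$ does not disturb (OP), (dcl)$_D$ or (ind)$_D$ for the enlarged parameter set $D' = D \cup \{p_1,\dots,p_m\}$, which is still $\dcl$-independent over $P$ (the $p_i$ lie in $P$, so they contribute nothing to $\dcl$-independence over $P$ — one checks that a set is $\dcl$-independent over $P$ iff its non-$P$ part is, and here the non-$P$ part is still just $D$). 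Then $X$ is $D'$-definable and $P$-bound over $D'$, so Theorem B yields a $D'$-definable, hence $D \cup P$-definable, injective map $\tau : X \to P^l$. Since $D \subseteq \dcl(A)$ (indeed $D \subseteq A$ by construction), $\tau$ is $A \cup P$-definable, which is the desired conclusion.

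The main obstacle — really the only subtlety — is the bookkeeping in the previous paragraph: making sure that "$P$-bound over $A$" for an $\cal L_A$-definable witness $h$ genuinely transfers to "$P$-bound over $D$" (or $D'$) in exactly the form Theorem B consumes, and that none of the three hypotheses (OP), (dcl)$_D$, (ind)$_D$ is sensitive to the harmless addition of parameters from $P$ itself. For (OP) this is immediate since the condition there is stated for all sets $A$ whose non-$P$ part is $\dcl$-independent over $P$; for (dcl)$_D$ and (ind)$_D$ one just notes that enlarging $D$ by elements of $P$ changes neither $\dcl(BD)$-type expressions (the $P$-elements can be folded into $B$, $C$) nor the trace condition. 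Once this is checked the corollary is a direct corollary of Theorem B, with no new geometric input required.
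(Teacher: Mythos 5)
Your overall strategy is exactly the paper's: take $D$ a maximal $\dcl$-independent-over-$P$ subset of $A$, observe $\dcl(A)\sub\dcl(DP)$, and reduce to Theorem B by absorbing the remaining $P$-parameters. But there is a genuine error in the step where you invoke Theorem B. You claim that $D'=D\cup\{p_1,\dots,p_m\}$ (with $p_i\in P$) is ``still $\dcl$-independent over $P$,'' justified by the assertion that ``a set is $\dcl$-independent over $P$ iff its non-$P$ part is.'' That biconditional is false: any set containing an element $p$ of $P$ fails $\dcl$-independence over $P$, since trivially $p\in\dcl(P)\sub\dcl((D'\sm\{p\})\cup P)$. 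Consequently $D'$ does not satisfy the hypothesis of Theorem B (``$D$ is $\dcl$-independent over $P$''), and you cannot cite it for $D'$ as stated. You also waver between two distinct moves --- enlarging the parameter set $D$ versus adding $p_1,\dots,p_m$ as constants to the base language --- and the false claim arises precisely from conflating them.

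The paper's proof avoids this by committing to the second move cleanly: it forms $\cal M'$ from $\cal M$ by adding constants for \emph{all} of $P$, sets $\WM'=\la\cal M',P\ra$, and proves (Lemma \ref{preserve}) that (OP), (dcl)$_D$ and (ind)$_D$ transfer to $\WM'$ with the \emph{same} $D$. Since $\dcl_{\cal M'}(S)=\dcl(SP)$, the set $D$ remains $\dcl_{\cal M'}$-independent over $P$, and now $X$ is $D$-definable in $\WM'$; Theorem B then applies verbatim in $\WM'$, giving a $D$-definable (in $\WM'$), hence $DP$-definable, hence $AP$-definable, injective $\tau:X\to P^l$. Your verification that (OP), (dcl) and (ind) survive the addition of $P$-parameters is the right idea and is essentially Lemma \ref{preserve}, but it should be packaged as a statement about the expanded structure $\WM'$ with $D$ unchanged, not as a statement that an enlarged $D'$ is $\dcl$-independent over $P$. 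Once corrected in that way, your proof coincides with the paper's.
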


Allowing parameters from $P$ is standard practice when studying definability in this context; see for example also \cite[Lemma 2.5 and Corollary 3.26]{egh}.

We note that Corollary \ref{corD} settles affirmatively \cite[Question 7.12]{egh} in our examples. The same question was asked to the author by E. Baro and A. Martin-Pizarro during the Summer School in Tame Geometry in Konstanz in 2016.

In Section \ref{sec-optimality}, we establish the optimality of our results. Besides the aforementioned Example \ref{noEI}, we prove in Corollary \ref{dclnec} that (dcl)$_D$ is  necessary for $P_{ind(D)}$ to eliminate imaginaries. More precisely, we introduce (earlier, in Section \ref{sec-examples}) a further property for $D$, called ($\dcl'$)$_D$, and show in Proposition \ref{dclnec2}  that if (OP) and (ind)$_D$ hold, and $D$ is $\dcl$-independent over $P$, then
\begin{equation}
\text{ $P_{ind(D)}$ eliminates imaginaries} \,\,\, \Lrarr\,\,\,\, \text{(dcl)}_D \,\,\,\,\Lrarr\,\,\,\, \text{($\dcl'$)}_D.\notag
 \end{equation}
In Example \ref{dclDfail} we observe that if we do not assume (OP), the above three properties need not hold. We do not know whether they hold, if we assume (OP) and (ind)$_D$ (but not that $D$ is $\dcl$-independent over $P$). Finally, (OP) does not imply (ind)$_D$ (Remark \ref{rmk-P}), but we do not know whether (ind)$_D$ is necessary for $P_{ind(D)}$ to eliminate imaginaries (Question \ref{qn-mann}).

In the Appendix, written jointly with P. Hieronymi, we show that Theorem C does not hold for arbitrary dense pairs of o-minimal structures. Namely, we provide an example of an o-minimal trace that does not eliminate imaginaries.

 Our proof of Theorem A is influenced by two previous accounts on elimination of imaginaries in ordered structures, \cite{pi} and \cite{wen}, but diverges from both of them substantially. As noted in Fact \ref{fact-ei2} below,
to prove that an ordered pregeometric structure \cal N eliminates imaginaries, the following is enough:
 \begin{itemize}
  \item[(*)]  Let $B, C\sub N$ and $A=\dcl_{\cal N}(B) \cap \dcl_{\cal N}(C)$. If $X\sub N^n$ is $B$-definable and $C$-definable, then $X$ is $A$-definable.
\end{itemize}
So our approach to Theorem A is to prove (*) for $\cal N=P_{ind(D)}$  (Lemma \ref{pillay2}). There are two obstacles in adopting the existing accounts. First, Pillay \cite{pi} establishes (*) for an o-minimal $\cal N$ under the additional assumption that $A\elsub \cal N$, which need no longer be true here (\cite[Proposition 2.4]{ehk}). Second, $P_{ind(D)}$ is only weakly o-minimal, and Wencel's proof \cite{wen} of elimination of imaginaries in certain weakly o-minimal structures (even under the assumption $A\elsub P_{ind(D)}$) unfortunately contains a serious gap (see Section \ref{sec-wen}). We are thus led to produce a new strategy for proving (*) that moreover works under our general assumptions.

$ $\\
\noindent\emph{Structure of the paper.} In Section \ref{prelim}, we recall some basic facts about elimination of imaginaries and discuss the pregeometry in $P_{ind(D)}$. Section \ref{sec-main} contains the proofs of Theorems A and B. We also point out the aforementioned gap in \cite{wen}. A major part of our work is
Section \ref{sec-examples}, where we prove (dcl)$_D$ in the three main examples: dense pairs of real closed fields, expansions of $\cal M$ by a dense independent set, and expansions by a dense multiplicative group with the Mann property.  In Section \ref{sec-optimality}, we provide examples to establish the optimality of our results.

$ $\\
\noindent\emph{Acknowledgements.} The author wishes to thank Ayhan G\"unaydin, Assaf Hasson, and Philipp Hieronymi for many helpful discussions on this topic, and for pointing out  corrections to the first draft. Thanks also to Alex Savatovsky and Patrick Speissegger   for providing their important feedback during  a seminar series at the University of Konstanz. Finally, thanks to the referee for many helpful comments that improved the original manuscript.

\section{Preliminaries}\label{prelim}
We assume familiarity with the basics of o-minimality and pregeometries, as  can be found, for example, in \cite{vdd-book} or \cite{pi}. A tuple of elements is denoted just by one element, and we write $b\sub B$ if $b$ is a tuple with coordinates from $B$. The set of realizations of a formula $\phi(x)$ in a structure $\cal N$ is denoted by $\phi(N^n)$, where $x$ is an $n$-tuple. If $A\sub N$, we write $A \elsub \cal N$ to denote that $A$ is an elementary substructure of $\cal N$ in the language of $\cal N$.  If $A, B\sub N$, we often write $AB$ for $A\cup B$. We denote by $\Aut(\cal N)$ the group of automorphisms of $\cal N$. We denote by $\Gamma(f)$ the graph of a function $f$.

Recall that  $\cal M=\la M, <, +, 0, \dots\ra$ is our fixed o-minimal expansion of an ordered group with a distinguished positive element $1$. We denote the definable closure operator in \cal M by $\dcl$, the corresponding rank of a tuple by $\rk$, and the dimension of an $\cal L$-definable set by $\dim$. The topological closure of a set $X\sub M^n$ is denoted by $\overline X$.
If $X, Y\sub M^n$, we call $X$ dense in $Y$, if $\overline{X\cap Y}=\overline Y$.

\subsection{Elimination of imaginaries}
The property of elimination of imaginaries (Definition \ref{ei}) can be formulated in many  ways. In Fact \ref{fact-ei} we  state  one  which will be useful for our purposes. Notice that Definition \ref{ei} is sometimes called \emph{uniform} elimination of imaginaries,
whereas elimination of imaginaries is reserved for the condition that every definable set $X$ has a canonical parameter (see below).
However, in the presence of two distinct constants in our $\cal L$, the two notions coincide (\cite[Lemma 4.3]{ma}).

For the rest of this subsection,  let $\cal N$  be a sufficiently saturated structure with two distinct constants in its language, and $X\sub N^n$ a definable set. We call $A\sub N$ a \emph{defining set for $X$}, if $X$ is $A$-definable. The following fact is noted in \cite[Section 3]{pi}.

\begin{fact}\label{fact-ei} Assume $\cal N$ is an ordered structure. Then $\cal N$ eliminates imaginaries if and only if every definable set has a smallest definably closed defining set. \end{fact}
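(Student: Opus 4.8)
The plan is to prove both implications directly. The orderedness of $\cal N$ will be used only for two standard consequences: that $\acl=\dcl$ in $\cal N$, and that finite sets of tuples are coded by a real tuple (list their elements in the induced lexicographic order); the presence of two distinct constants lets one pass between the non-uniform and uniform forms of elimination of imaginaries, as in \cite[Lemma 4.3]{ma}. The forward direction uses neither hypothesis beyond sufficient saturation.

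\emph{($\Rightarrow$)} Suppose $\cal N$ eliminates imaginaries and let $X=\phi(N^n,b)$ with $\phi(x,y)\in\cal L$ and $b\sub N$. The formula $E(y,y')\,:\equiv\,\forall x\,(\phi(x,y)\leftrightarrow\phi(x,y'))$ defines an $\es$-definable equivalence relation on the sort of $y$, so by elimination of imaginaries there is an $\es$-definable map $f$ whose fibres are exactly the $E$-classes. Put $e:=f(b)$. Then $X$ is $e$-definable, since $x\in X\Lrarr\exists y\,(f(y)=e\wedge\phi(x,y))$. Moreover, if $A$ is any defining set for $X$ and $\sigma\in\Aut(\cal N/A)$, then $\sigma(X)=X$, i.e. $\phi(N^n,\sigma b)=\phi(N^n,b)$, so $E(b,\sigma b)$ holds and $\sigma(e)=f(\sigma b)=f(b)=e$. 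By sufficient saturation and homogeneity of $\cal N$ this forces $e\in\dcl(A)$. Hence $\dcl(e)\sub A$ for every definably closed defining set $A$, and as $\dcl(e)$ is itself a definably closed defining set, it is the smallest one.

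\emph{($\Leftarrow$)} Assume every definable set has a smallest definably closed defining set. I will first establish \emph{weak} elimination of imaginaries: every definable $X\sub N^n$ has a finite tuple $e$ such that $X$ is $e$-definable and $e\in\dcl(A)$ for every defining set $A$ of $X$. Let $A_X$ be the smallest definably closed defining set of $X$, and fix a finite $b$ with $X$ $b$-definable, so $A_X\sub\dcl(b)$ and hence $A_X$ has finite $\dcl$-rank. Since $\cal N$ is ordered we have $\acl=\dcl$, so $\dcl$ is a pregeometry on $\cal N$; choosing a maximal $\dcl$-independent tuple $e\sub A_X$, exchange gives $A_X\sub\acl(e)=\dcl(e)$, and the reverse inclusion is clear, so $A_X=\dcl(e)$ with $e$ finite. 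Then $X$ is $\dcl(e)$-definable, hence $e$-definable, and $e\sub A_X\sub\dcl(A)$ for every defining set $A$ of $X$, as required. To upgrade weak elimination of imaginaries to elimination of imaginaries, I invoke the standard criterion (see \cite[Section 3]{pi}) that a structure eliminates imaginaries as soon as it has weak elimination of imaginaries and codes finite sets of tuples; finite sets of tuples are coded in $\cal N$ because $\cal N$ is ordered (concatenate the lexicographically increasing enumeration, padding to a fixed length using the two constants). This yields elimination of imaginaries for $\cal N$.

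I expect the main obstacle to be the converse direction, and within it two points: first, extracting a \emph{finite} weak canonical parameter $e$ with $\dcl(e)=A_X$, which relies on $A_X\sub\dcl(b)$ together with the pregeometry properties of $\acl=\dcl$; and second, the passage from weak elimination of imaginaries to full elimination of imaginaries, where orderedness does the essential work through the coding of finite imaginary sets. The forward direction, by contrast, is a routine application of the hypothesis to the ``same-fibre'' equivalence relation.
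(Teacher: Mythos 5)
The paper does not actually prove Fact \ref{fact-ei}; it simply records it with a citation to \cite[Section 3]{pi}. So there is no internal proof to compare against, and I will assess your argument on its own terms.

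Your forward direction is fine: applying the hypothesis to the same-fibre equivalence relation $E(y,y')$ gives the code $e=f(b)$, and sufficient saturation/homogeneity converts $\Aut(\cal N/A)$-invariance of $e$ into $e\in\dcl(A)$ for any defining set $A$.

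The converse direction has two problems. The first is local and fixable: you assert that because $\cal N$ is ordered, $\acl=\dcl$ and therefore $\dcl$ is a pregeometry. Orderedness does give $\acl=\dcl$ (finite definable sets are enumerated by the order), but it does \emph{not} give exchange; $\acl=\dcl$ being a pregeometry is a separate hypothesis, which the paper explicitly assumes only in Fact \ref{fact-ei2}, not here. Fortunately this step is unnecessary: if $e\sub A_X$ is any finite tuple with $X$ $e$-definable (which exists since $X$ is definable over some finite subset of $A_X$), then $\dcl(e)\sub A_X$ because $A_X$ is definably closed, and $A_X\sub\dcl(e)$ because $\dcl(e)$ is a definably closed defining set and $A_X$ is the smallest such; so $A_X=\dcl(e)$ with no exchange needed.

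The second problem is the substantive one. What you call ``weak elimination of imaginaries'' --- the existence of a finite $e$ with $X$ $e$-definable and $e\in\dcl(A)$ for every defining set $A$ --- is not weak EI in the usual sense; it is precisely a restatement of ``$\dcl(e)$ is the smallest definably closed defining set,'' i.e.\ a restatement of the hypothesis you are trying to exploit. The standard criterion ``weak EI plus coding of finite sets implies EI'' requires that the $\Aut(\cal N/X)$-orbit of $e$ be finite (equivalently $e\in\acl^{eq}(\ulcorner X\urcorner)$), and that is a condition about automorphisms fixing $X$ \emph{setwise}, not about those fixing some real defining set $A$ \emph{pointwise}. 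Your property only controls the latter: from $e\in\dcl(A)$ for all defining $A$ you get that $e$ is fixed by $\Aut(\cal N/A)$ for each such $A$, but a $\sigma$ with $\sigma(X)=X$ need not fix any real defining set pointwise (it fixes $A_X$ only setwise), so finiteness of $\{\sigma(e):\sigma\in\Aut(\cal N/X)\}$ does not follow. Consequently, invoking the finite-set-coding upgrade at this point is unjustified, and the converse direction as written does not close. This is exactly where the order (and, in the intended applications, the exchange property and the lemma (*)) must do real work to show that $\sigma(X)=X$ forces $\sigma$ to fix $A_X$ pointwise, or at least that the relevant orbit is finite; that argument is missing.
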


It is  well-known that if $C$ is the smallest definably closed defining set for $X$, then $C=\dcl_{\cal N}(p)$, for some tuple $p\sub N$ satisfying: for every $\tau\in \Aut(\cal N)$,
$$\tau(p)=p \,\,\,\,\Lrarr\,\,\,\, \tau(X)=X.$$
Such a tuple $p$  is called a \emph{canonical parameter}  for $X$.  Clearly, if $X$ is $A$-definable, then $p\sub \dcl_{\cal N}(A)$.

The next fact can also be extracted  from \cite[Section 3]{pi}.

 \begin{fact}\label{fact-ei2}
Assume  $\cal N$ is an ordered pregeometric structure that satisfies (*) from the introduction. Then $\cal N$ eliminates imaginaries.
 \end{fact}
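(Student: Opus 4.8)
The plan is to deduce this from Fact \ref{fact-ei}: since $\cal N$ is ordered, it suffices to show that an arbitrary definable set $X\sub N^n$ has a smallest definably closed defining set. By the finite character of definability, $X$ is $b$-definable for some finite tuple $b\sub N$, so in particular $\dcl_{\cal N}(b)$ is a definably closed defining set for $X$ and the family of such sets is non-empty. The difficulty is that (*) only controls the intersection of \emph{two} defining sets, whereas ``smallest'' refers to the intersection of the entire, a priori infinite, family of definably closed defining sets; what bridges this gap is the pregeometry on $\cal N$ together with the fact that $\rk(b)<\infty$.

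First I would use (*) to pass to defining sets inside $\dcl_{\cal N}(b)$. If $\dcl_{\cal N}(C)$ is any definably closed defining set for $X$, then (*) applied to $B=b$ and $C$ shows that $X$ is definable over $\dcl_{\cal N}(b)\cap \dcl_{\cal N}(C)$, which is again a definably closed defining set, now contained in $\dcl_{\cal N}(b)$ and hence of rank at most $\rk(b)$. Therefore, among all definably closed defining sets for $X$ that are contained in $\dcl_{\cal N}(b)$, there is one of least rank; call it $A_0$, say $A_0=\dcl_{\cal N}(C_1)\sub\dcl_{\cal N}(b)$.

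Next I would check that $A_0$ is contained in \emph{every} definably closed defining set for $X$, which makes it the smallest. Let $\dcl_{\cal N}(C)$ be an arbitrary such set. By (*), $\dcl_{\cal N}(C)\cap A_0$ is again a defining set for $X$; it is definably closed, contained in $A_0$, and of rank at most $\rk(A_0)$, so by the minimality of $\rk(A_0)$ it has rank exactly $\rk(A_0)$. Here I would invoke the elementary fact about the pregeometry $\dcl_{\cal N}$ that a definably closed subset $D$ of a set $E$ with $\rk(D)=\rk(E)<\infty$ must equal $E$: a basis of $D$ is then a maximal independent subset of $E$, hence a basis of $E$, so $E\sub\dcl_{\cal N}(D)=D$. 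Applying this with $E=A_0$ and $D=\dcl_{\cal N}(C)\cap A_0$ gives $A_0=\dcl_{\cal N}(C)\cap A_0\sub\dcl_{\cal N}(C)$, as wanted.

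Thus $A_0$ is the smallest definably closed defining set for $X$, and Fact \ref{fact-ei} yields that $\cal N$ eliminates imaginaries. The only genuinely non-routine point is the reduction to a \emph{finite}-rank setting: combining finite character of definability with (*) lets one replace the infinite intersection of all defining sets by a minimisation over definably closed subsets of the single finite-rank set $\dcl_{\cal N}(b)$, after which the pregeometry axioms close the argument cleanly.
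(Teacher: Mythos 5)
Your proof is correct and follows essentially the same strategy as the paper's: reduce to a finite-rank setting using finite character of definability, minimise a rank-like quantity, and combine (*) with the exchange property to show that the minimiser is contained in every definably closed defining set. The paper organises this as a proof by contradiction (choose a defining tuple $B_0$ that is $\dcl_{\cal N}$-independent of minimal \emph{size}; a proper definably closed subset of $\dcl_{\cal N}(B_0)$ would yield a strictly shorter defining tuple by exchange), whereas you argue directly by minimising $\rk$ among definably closed defining sets inside $\dcl_{\cal N}(b)$ and then proving containment. These are the same idea phrased in two ways, and both are valid; there is no gap.
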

\begin{proof}
Let $X\sub N^n$ be a definable set. We need to show that $X$ has a smallest definably closed defining set. Assume $X$ is $B_0$-definable,  $B_0\sub N$ is finite and $\dcl_{\cal N}$-independent, and $|B_0|$ is least  possible. We claim that $B=\dcl_{\cal N}(B_0)$ is the smallest definably closed defining set for $X$. Suppose not. Then there is another set $C\sub N$ such that $B\not\sub \dcl_{\cal N}(C)$, and $X$ is $C$-definable. Let $A=B\cap \dcl_{\cal N}(C)$.  By (*), $X$ is $A$-definable. Moreover, $A\subsetneqq B$ and $A$ is $\dcl_{\cal N}$-closed. By the exchange property in pregeometric theories, $A=\dcl_{\cal N}(A_0)$, for some $\dcl_{\cal N}$-independent $A_0$ with size $|A_0|<|B_0|$. Then $X$ is also $A_0$-definable, contradicting the choice of $B_0$.
\end{proof}

\subsection{The induced structure}
Recall from the introduction that
$$P_{ind(D)}=\la P, \{R\cap P^l: R\sub M^l \text{ $\cal L_{D}$-definable }, l\in \N\}\ra.$$

\begin{remark}\label{rmk-P}
Assuming (ind)$_D$, $P_{ind(D)}$ is weakly o-minimal. Indeed, every set $X\sub P$ definable in $P_{ind(D)}$ is of the form $Y\cap P$, for an $\cal L$-definable $Y\sub M$, and hence a finite union of  convex subsets of $P$. This description need no longer be true in general.
For example, let  $\cal M$ be the real field and $P= 2^\Z 3^\Z$. Let $f:\R^{>0}\to \R^{>0}$ with $f(x)=\sqrt{x}$. Then the projection of $\Gamma(f) \cap P^2$ onto the first coordinate
is the set of all elements in $P$ which are divisible by $2$ in the multiplicative sense; that is, the set of all elements of the form $2^{2m} 3^{2n}$, $m, n\in \Z$. This set is  not a finite union of convex subsets of $P$, but it is definable in $P_{ind(\es)}$.
Thus $\la \cal M, P\ra$ satisfies (OP) (by \cite[Section 2]{egh}) and (dcl)$_\es$ (see Section \ref{sec-mann}), but not (ind)$_\es$.
\end{remark}

\begin{lemma}\label{extendf} Assume \textup{(OP)} and \textup{(ind)$_D$}.
Let $f:P^n\to P^k$ be an $A$-definable map in $P_{ind(D)}$. Then there is an $\cal L_{AD}$-definable map $F:M^n\to M^k$ that extends $f$.
\end{lemma}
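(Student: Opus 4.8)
The plan is to reduce at once to the case $k=1$: each coordinate $f_i=\pi_i\circ f:P^n\to P$ is again definable in $P_{ind(D)}$ over $A$, and an extension of $f$ is obtained by gluing extensions of the $f_i$ coordinatewise, so assume $f:P^n\to P$. First I would apply \textup{(ind)$_D$} to the graph $\Gamma(f)\subseteq P^{n+1}$, which is an $A$-definable set in $P_{ind(D)}$, to get an $\cal L_{AD}$-definable $W\subseteq M^{n+1}$ with $W\cap P^{n+1}=\Gamma(f)$. The first observation is that for every $a\in P^n$ the fiber $W_a$ is \emph{finite}: since $W_a\cap P=\{f(a)\}$, an infinite $W_a$ would contain an interval (o-minimality of $\cal M$), which meets $P$ infinitely often as $P$ is dense in $M$. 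Next I would cell-decompose $W$ in $\cal M$ relative to $\pi:M^{n+1}\to M^n$: $W$ becomes a finite disjoint union of $\cal L_{AD}$-definable cells, each either the graph of a continuous $\cal L_{AD}$-definable function over an $\cal L_{AD}$-definable base cell, or a ``band'' with nonempty open fibers over its base. Finiteness of $W_a$ for $a\in P^n$ forces $P^n$ to be disjoint from the base of every band cell; hence over $P^n$, $W$ is the union of the graphs of finitely many $\cal L_{AD}$-definable partial functions $g_1,\dots,g_s$ (with $\cal L_{AD}$-definable domains $D_1,\dots,D_s$), and $f(a)\in\{g_j(a):a\in D_j\}$ for every $a\in P^n$.

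The main step is a \emph{second} application of \textup{(ind)$_D$}, used to select the correct branch $g_j$ $\cal L_{AD}$-definably. Set $E_j:=\{a\in P^n: a\in D_j \text{ and } g_j(a)=f(a)\}$. Each $E_j$ is $A$-definable in $P_{ind(D)}$: the traces $D_j\cap P^n$ and $\Gamma(g_j)\cap P^{n+1}$ are traces of $\cal L_{AD}$-definable sets, $\Gamma(f)$ is $A$-definable in $P_{ind(D)}$ by hypothesis, and ``$g_j(a)=f(a)$'' is expressed by $\exists c\,[\Gamma(f)(a,c)\wedge(\Gamma(g_j)\cap P^{n+1})(a,c)]$ with $c$ ranging over $P$. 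By the previous paragraph $\bigcup_j E_j=P^n$. Now apply \textup{(ind)$_D$} to obtain $\cal L_{AD}$-definable sets $V_j\subseteq D_j$ with $V_j\cap P^n=E_j$, disjointify by $V_j'':=V_j\setminus\bigcup_{i<j}V_i$, and define $F:M^n\to M$ by $F(a)=g_j(a)$ for the unique $j$ with $a\in V_j''$, and $F(a)=0$ if there is no such $j$. Then $F$ is $\cal L_{AD}$-definable, and for $a\in P^n$ the $j$ with $a\in V_j''$ satisfies $a\in E_j$, so $F(a)=g_j(a)=f(a)$; that is, $F$ extends $f$. Finally I would recombine the $k$ coordinates.

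I expect the conceptual obstacle to be realizing that one cannot finish by a single application of definable choice in $\cal M$ to $W$ — $P_{ind(D)}$ admits no definable Skolem functions, so the branch $g_j$ through which $f$ runs genuinely varies with $a$ — and that the remedy is to feed the ``agreement sets'' $E_j$ back through \textup{(ind)$_D$}; it is precisely the density of $P$, via the finiteness of the fibers $W_a$, that reduces $W$ to finitely many branches over $P^n$ in the first place. I note that \textup{(OP)} does not seem to be needed for this lemma; presumably it is listed only because it is the standing hypothesis of the surrounding results.
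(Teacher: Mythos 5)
Your proof is correct and takes the same starting point as the paper---apply (ind)$_D$ to $\Gamma(f)$ to get an $\cal L_{AD}$-definable $T$ (your $W$) whose trace on $P^{n+k}$ is $\Gamma(f)$---but then spells out, in much more detail, how to pass from $T$ to the graph of an $\cal L_{AD}$-definable function. The paper compresses this into a single sentence (``by o-minimality, we may assume that for every $x\in P^n$, $T_x$ is a singleton'') and then defines $F$ over the set where $T_x$ is a singleton and by $0$ elsewhere. Your reduction to $k=1$, the finiteness of the fibers $W_a$ for $a\in P^n$, the cell decomposition into finitely many branches $g_1,\dots,g_s$, and especially the second application of (ind)$_D$ to the agreement sets $E_j$ to select the correct branch $\cal L_{AD}$-definably together give a complete justification of that compressed step; your diagnosis---that one cannot finish by a single application of definable choice in $\cal M$, since $P_{ind(D)}$ has no Skolem functions and the branch genuinely varies with $a$---is exactly the right explanation of where the subtlety lies. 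This is a genuinely more detailed route than the one the paper writes down.

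However, your closing remark that (OP) is not needed is wrong, and the slip is already visible in the phrase ``as $P$ is dense in $M$'': that is not a hypothesis, and it fails in the Mann-property example, where $P\sub M^{>0}$. What (OP) actually gives (take $A=\es$ and $V=P$) is that $\overline P$ is $\cal L$-definable. You should therefore first replace $W$ by $W\cap(M^n\times\overline P)$, which remains $\cal L_{AD}$-definable precisely because of (OP) and has the same trace on $P^{n+1}$; only then does an infinite fiber $W_a\sub\overline P$ contain an interval of $\overline P$, which meets $P$ infinitely often since $P$ is dense in $\overline P$. Without (OP) you cannot carry out this restriction while preserving $\cal L_{AD}$-definability, so you have no way to force the fibers over $P^n$ to be finite, and the entire cell-decomposition step collapses. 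A further small point: (ind)$_D$ gives $V_j$ with $V_j\cap P^n=E_j$, but not $V_j\sub D_j$; since $E_j\sub D_j\cap P^n$, just replace $V_j$ by $V_j\cap D_j$.
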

\begin{proof}
By (ind)$_D$, there is an $\cal L_{AD}$-definable set $T\sub M^{n+k}$ such that $\Gamma(f)= T\cap P^{n+k}$. By (OP), $P$ is dense in an $\cal L$-definable set, and by o-minimality, we may assume that for every $x\in P^n$, $T_x$ is a singleton. The set
$$X=\{x\in \pi(T):\, T_x \text{ is a singleton}\}$$
is $\cal L_{AD}$-definable. So, $P^n\sub X$. Now let
$$T'=\left(\bigcup_{x\in X} \{x\}\times T_x\right)\cup \{(x, 0) : x\in M^n\sm X\}.$$
Then $T'$ is $\cal L_{AD}$-definable, it is the graph of a function $F: M^n\to M^k$, and
$\Gamma(f)=T'\cap P^{n+k}$, as required.
\end{proof}

We denote the definable closure operator in $P_{ind(D)}$ by $cl_D$.

\begin{cor}\label{clDA} Assume \textup{(OP)} and \textup{(ind)$_D$}. Then for every $A\sub P$, $\cl_D(A)=\dcl (AD)\cap P$.
\end{cor}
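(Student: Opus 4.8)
The plan is to prove the two inclusions of $\cl_D(A)=\dcl(AD)\cap P$ separately, directly from the definitions of the two closure operators. I expect $\dcl(AD)\cap P\subseteq\cl_D(A)$ to be purely formal, and the reverse inclusion to be where (OP), (ind)$_D$ and o-minimality of $\cal M$ do the work.

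For $\dcl(AD)\cap P\subseteq\cl_D(A)$: given $a\in\dcl(AD)\cap P$, I would fix an $\cal L$-formula $\chi(x,\bar y,\bar z)$ and tuples $\bar b\subseteq A$, $\bar d\subseteq D$ such that $\{a\}$ is the solution set of $\chi(x,\bar b,\bar d)$ in $\cal M$. Since $\bar d\subseteq D$, the formula $\chi(x,\bar y,\bar d)$ lies in $\cal L_D$, so $\cal L_{ind(D)}$ contains a corresponding relation symbol $R$, and the $\cal L_{ind(D)}$-formula $R(x,\bar b)$ has parameters from $A\subseteq P$ and, for $x\in P$, holds iff $\cal M\models\chi(x,\bar b,\bar d)$ iff $x=a$. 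As $a\in P$ this exhibits $\{a\}$ as $A$-definable in $P_{ind(D)}$, so $a\in\cl_D(A)$. This step needs no tameness hypothesis; it merely reflects that $P_{ind(D)}$ records every $\cal L_D$-definable trace.

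For $\cl_D(A)\subseteq\dcl(AD)\cap P$: I would take $a\in\cl_D(A)$, so $a\in P$ and $a$ lies in some finite $S\subseteq P$ that is $A$-definable in $P_{ind(D)}$. By (ind)$_D$, $S=Y\cap P$ for some $\cal L_{AD}$-definable $Y\subseteq M$; by (OP) (applied with $A=\es$ and $V=P$) the closure $\overline P$ is $\cal L_\es$-definable, so I may replace $Y$ by $Y\cap\overline P$ and assume $Y\subseteq\overline P$, still with $Y$ $\cal L_{AD}$-definable and $Y\cap P=S$. The crucial step is to show that $Y$ is then finite: otherwise, by o-minimality $Y$ contains an open interval $I$, and since $S$ is finite there is a nonempty open subinterval $J\subseteq I\setminus S$; being open in $M$ and contained in $\overline P$, $J$ is open in the subspace $\overline P$, so density of $P$ in $\overline P$ gives $J\cap P\neq\es$, contradicting $J\cap P\subseteq(Y\cap P)\setminus S=\es$. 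Hence $Y$ is finite and $\cal L_{AD}$-definable, so each of its elements — in particular $a$ — lies in $\dcl(AD)$, and together with $a\in P$ this yields $a\in\dcl(AD)\cap P$.

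The one genuine obstacle is the trimming step: (ind)$_D$ only guarantees a witness $Y$ with finite \emph{trace} on $P$, whereas $Y$ could a priori contain intervals disjoint from $P$; (OP) is exactly what lets me cut $Y$ down into $\overline P$, after which the actual density of $P$ in $\overline P$ forces $Y$ finite and the standard behaviour of $\dcl$ in o-minimal structures finishes the argument.
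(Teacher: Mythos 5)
Your proof is correct, and it takes a genuinely different (if closely related) route from the paper's. The paper dispatches both inclusions in one line: $\supseteq$ is declared ``immediate from the definitions'' (exactly your first paragraph), and $\sub$ is declared ``immediate from Lemma~\ref{extendf},'' the lemma asserting that any $A$-definable map $f\colon P^n\to P^k$ in $P_{ind(D)}$ extends to an $\cal L_{AD}$-definable map $F\colon M^n\to M^k$. Applying that lemma to the relevant (constant or parameter-dependent) map witnessing $a\in\cl_D(A)$ immediately lands $a$ in $\dcl(AD)$. You instead bypass the extension lemma and argue directly with sets: you use (ind)$_D$ to realize $\{a\}$ as $Y\cap P$ for an $\cal L_{AD}$-definable $Y$, use (OP) to get $\overline{P}$ $\cal L_\emptyset$-definable so you can trim $Y$ into $\overline{P}$, and then use density of $P$ in $\overline{P}$ plus o-minimality to force $Y$ to be finite, whence $a\in\dcl(AD)$ because finite $\cal L_{AD}$-definable subsets of a linear order have all their elements in $\dcl(AD)$. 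The underlying ingredients are identical to those in the paper's proof of Lemma~\ref{extendf} (where the same density/o-minimality shrinking appears, in slightly compressed form, to make the fibers of $T$ singletons), so the two arguments are siblings. What the paper's route buys is reusability: Lemma~\ref{extendf} is a general function-extension statement useful beyond this corollary. What your route buys is transparency: it is self-contained, makes the role of (OP) and density completely explicit, and in passing actually proves the (a priori stronger) inclusion of the $P_{ind(D)}$-\emph{algebraic} closure of $A$ into $\dcl(AD)\cap P$, since your argument only uses that $a$ lies in a finite $A$-definable set.

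One very small nit: when you cut $Y$ down to $Y\cap\overline{P}$ you should note that the trace on $P$ is preserved because $P\subseteq\overline{P}$; you implicitly use this when asserting the trimmed $Y$ still satisfies $Y\cap P=S$. This is trivially true but worth a clause.
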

\begin{proof}
The inclusion $\supseteq$ is immediate from the definitions, whereas the inclusion $\sub$ is immediate from Lemma \ref{extendf}.
\end{proof}

By Corollary \ref{clDA} and the fact that $\dcl(-D)$ defines a pregeometry in $\cal M$, it follows easily that, under  \textup{(ind)$_D$}, $cl_D(-)$ defines a pregeometry in $P_{ind(D)}$. This pregeometry need not satisfy usual properties known for the definable closure in  o-minimal structures. For example, as pointed out in  \cite[Proposition 2.4]{ehk}, if $\WM$ is a dense pair, then there are  $A\sub P$, such that $cl_D(A)\not\elsub P_{ind(D)}$. It is natural to ask the following:

\begin{question}\label{qnD} Under what assumptions on $\WM$ and $D$,  is it true that for all $A\sub P$, $cl_D(A)\elsub P_{ind(D)}$?
\end{question}

\section{Proofs of Theorems A and B}\label{sec-main}

In this section we prove  elimination of imaginaries  for $P_{ind(D)}$ as in Theorem A, and  deduce Theorem B from it. Our goal is to establish (*) from Introduction  for $\cal N=P_{ind(D)}$ (Lemma \ref{pillay2} below). The strategy is to reduce its proof to \cite[Proposition 2.3]{pi}, which is an assertion of (*) for $\cal M$.

We need the following key technical lemma.  Right after it, we illustrate  the main points of its  proof with an example.

 \begin{lemma}\label{general1} Assume \textup{(OP)} and that $D$ is $\dcl$-independent over $P$.  Let $B, C\sub P$,  $X\sub P^n$, and $Y, Z\sub M^n$ such that $Y$ is $\cal L_{BD}$-definable, $Z$ is $\cal L_{CD}$-definable and
\begin{equation}
X=P^n\cap Y=P^n\cap Z.\label{YZ}
\end{equation}
Then there is $W\sub M^n$, both $\cal L_{BD}$-definable and $\cal L_{CD}$-definable, such that
$$X=P^n\cap W.$$
\end{lemma}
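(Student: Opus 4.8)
The plan is to argue by induction on $d=\max(\dim Y,\dim Z)$, the guiding idea being that property \textup{(OP)} should be applied not to $Y$ or $Z$, but to the trace $X$ itself. Indeed, since $X=P^n\cap Y$ is definable in $\WM$ over $BD$ and $BD\sm P\sub D$ is $\dcl$-independent over $P$, \textup{(OP)} yields that $\overline X\sub M^n$ is $\cal L_{BD}$-definable; running the same argument with $Z$ in place of $Y$ shows $\overline X$ is also $\cal L_{CD}$-definable. Thus $\overline X$ is already a set of the required kind, and the only defect is that $P^n\cap\overline X$ may be strictly larger than $X$.

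I would then control this defect. Put $X'=(P^n\cap\overline X)\sm X$. If $a\in X'$ then $a\in P^n\sm Y$ while $a\in\overline X\sub\overline Y$, so $a\in\overline Y\sm Y$; since $\dim(\overline Y\sm Y)<\dim Y$ whenever $Y\neq\es$, the set $X'$ is the $P$-trace of the $\cal L_{BD}$-definable set $Y':=\overline X\sm Y$, which has dimension $<\dim Y$. Symmetrically, $X'=P^n\cap Z'$ for the $\cal L_{CD}$-definable set $Z':=\overline X\sm Z$ of dimension $<\dim Z$. The crucial point is that $Y'$ and $Z'$ have one and the same $P$-trace, namely $X'$.

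To finish, I would run the induction: if $X=\es$, take $W=\es$; otherwise $Y,Z\neq\es$, hence $\max(\dim Y',\dim Z')<\max(\dim Y,\dim Z)=d$, and the induction hypothesis applied to $B$, $C$, $X'$, $Y'$, $Z'$ provides a set $W'\sub M^n$, both $\cal L_{BD}$- and $\cal L_{CD}$-definable, with $X'=P^n\cap W'$. Then $W:=\overline X\sm W'$ is both $\cal L_{BD}$- and $\cal L_{CD}$-definable, and $P^n\cap W=(P^n\cap\overline X)\sm(P^n\cap W')=(X\sqcup X')\sm X'=X$, as required.

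The only genuine obstacle is the opening move: recognising that \textup{(OP)} should be fed the trace $X$, which simultaneously produces a single $\cal L$-definable set $\overline X$ lying in both sublanguages, and that the resulting error term $X'$ is again, simultaneously in both sublanguages, the $P$-trace of a strictly lower-dimensional $\cal L$-definable set — this is exactly what makes the dimension induction close. The remaining steps are routine manipulations with o-minimal dimension and the frontier inequality.
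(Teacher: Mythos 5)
Your proof is correct. Both your argument and the paper's run by induction on dimension (your $\max(\dim Y,\dim Z)$ equals the paper's $\dim(Y\cup Z)$), and both hinge on the same opening move: apply (OP) to the trace $X$, which is simultaneously $BD$- and $CD$-definable in $\widetilde{\cal M}$, so $\overline X$ lies in both sublanguages. From there you diverge in the decomposition. The paper introduces $K=\overline{(\overline X\sm(Y\cup Z))\cap P^n}$, proves three claims (that $K$ is definable in both sublanguages, $\dim K<k$, and $(\overline X\sm K)\cap P^n\sub X$), splits $X$ as $(X\cap K)\cup(P^n\cap(\overline X\sm K))$, applies (OP) a second time to get $K$ definable in both languages, and recurses on $X\cap K$ with $Y\cap K$, $Z\cap K$. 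You instead skip the closure: you work directly with the error set $X'=(P^n\cap\overline X)\sm X$, note that $X'=P^n\cap(\overline X\sm Y)=P^n\cap(\overline X\sm Z)$, and extract the dimension drop from the frontier inequality applied to $Y$ and $Z$ themselves (since $\overline X\sm Y\sub\overline Y\sm Y$). Recursing on $X'$ with $\overline X\sm Y$, $\overline X\sm Z$ and taking $W=\overline X\sm W'$ closes it in one step. Your version is a genuine simplification: it invokes (OP) only once, has no auxiliary closure $K$, and replaces the three-claim case analysis with one frontier estimate; the paper's $K$-based version makes the dimension drop come from $\dim K<k$ rather than from the frontier of $Y$ and $Z$, which is a less direct route to the same conclusion.
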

\begin{proof}

We work by induction on $\dim (Y\cup Z)$.  First note that, by (\ref{YZ}), $X$ is both $BD$-definable and $CD$-definable in $\la \cal M, P\ra$. Since $B, C\sub P$, by (OP) it follows that $\overline X$ is $\cal L_{BD}$-definable and $\cal L_{CD}$-definable.

If $\dim (Y\cup Z)=0$, then $X$ is finite, and hence $X=\overline X$. So $X$ is both $\cal L_{BD}$-definable and $\cal L_{CD}$-definable, and can let $W=\overline X$.

Assume $\dim(Y\cup Z)=k>0$. Let
$$K=\overline{(\overline X\sm (Y\cup Z))\cap P^n}.$$

\noindent\textbf{Claim 1.} \emph{$K$ is $\cal L_{BD}$-definable and $\cal L_{CD}$-definable.}
\begin{proof}[Proof of Claim 1.]
It suffices to prove that $$(\overline X\sm (Y\cup Z))\cap P^n=(\overline X\sm Y)\cap P^n=(\overline X\sm Z)\cap P^n.$$
since the second (respectively, third) part is $BD$-definable (respectively, $CD$-definable) in $\la \cal M, P\ra$ and hence its closure $\cal L_{BD}$-definable (respectively, $\cal L_{CD}$-definable). We prove the first equality, the other being completely analogous. We only need to prove $\supseteq$. Let $x\in (\overline X\sm Y)\cap P^n$. We claim that $x\not\in Z$. Indeed, if $x\in Z$, then $x\in Z\cap P^n=X$, and hence $x\in X\sm Y$, contradicting (\ref{YZ}).
\end{proof}

\noindent\textbf{Claim 2.} We have
$$\dim K < k.$$
\begin{proof}[Proof of Claim 2.]
By (\ref{YZ}), we have $\overline X\sub \overline{Y\cup Z}$. Therefore,
$$\overline X\sm (Y\cup Z)\sub \overline{(Y\cup Z)}\sm (Y\cup Z)$$
has dimension $<k$, and hence so does $K\sub \overline{\overline X\sm (Y\cup Z)}$.
\end{proof}

\noindent\textbf{Claim 3.} We have
$$ (\overline X \sm K)\cap  P^n\sub X.$$
\begin{proof}[Proof of Claim 3.]
Let $x\in (\overline X \sm K)\cap  P^n$. By (\ref{YZ}) it suffices to show that $x\in Y\cup Z$. Assume not. Then $x\in (\overline X\sm (Y\cup Z))\cap P^n\sub K$, a contradiction.
\end{proof}

By Claim 3  and since $X\sub P^n$, we can write
\begin{equation}
   X=(X\cap K)\cup (X \cap (\overline X\sm K))=(X\cap  K)\cup (P^n \cap (\overline X\sm K)).\label{W1}
 \end{equation}
We also have $X\cap K= P^n \cap Y\cap K=P^n\cap Z\cap K$. By Claim 1, $Y\cap K$ is $\cal L_{BD}$-definable and $Z\cap K$ is $\cal L_{CD}$-definable. By Claim 2, $\dim ((Y\cap K)\cup(Z\cap K))<k$. Hence, by inductive hypothesis, there is $W_1\sub M^n$, both $\cal L_{BD}$-definable and $\cal L_{CD}$-definable, such that
\begin{equation}
  X\cap K=P^n\cap W_1.\label{W2}
\end{equation}
Let also $W_2= \overline X\sm K$. Again by Claim 1, $W_2$ is $\cal L_{BD}$-definable and $\cal L_{CD}$-definable. Therefore, for $W=W_1\cup W_2$, (\ref{W1}) and (\ref{W2}) give
$$X=P^n\cap W,$$
as required.
\end{proof}

\begin{example} The above proof can be illustrated as follows. Let   $\widetilde{\cal M}=\la \widetilde \R, P\ra$ be a dense pair with $\widetilde \R$ the real field, $l_\beta$ and $l_\gamma$ two non-parallel lines in $\R^2$,
$$Y=\R^2\sm l_\beta\,\,\,\,\,\,\text{ and }\,\,\,\,\,\, Z=\R^2\sm l_\gamma,$$
and $X$, $K$, $W_1$ and $W_2$  the sets defined in the above statement and proof.   Assume $l_\beta$ is $\cal L_B$-definable and $l_\gamma$ is $\cal L_C$-definable. Let $D=\es$. Apart from the intersection point $c\in l_\beta \cap l_\gamma$, the two lines cannot contain any other element of $P^2$. Indeed, such an element would belong to only one of $Y\cap P^2$ and $Z\cap P^2$, contradicting (\ref{YZ}). It follows that $\{c\}$ is both $\cal L_B$-definable and $\cal L_C$-definable. There are two cases:\smallskip

\noindent Case I. $c\in P^2$. In this case, $K=\{c\}$, $W_2= \R^2\sm \{c\}=Y\cup Z$
and $$X=(\R^2\sm \{c\})\cap P^2.$$

\noindent Case II. $c\not\in P^2$. In this case, $K=\es$, $W_2=\R^2$ and
$$X=\R^2 \cap P^2.$$ Note that in the second case, even though we also have $X=(Y\cup Z)\cap P^2$, the set $Y\cup Z$ is neither $\cal L_B$-definable nor $\cal L_C$-definable. In both cases, $W_1=\es$ and
$X=(\overline X\sm K)\cap P^2.$
We leave it to the reader to construct examples on $\widetilde \R$ where the above cases actually occur.
\end{example}

\begin{lemma}\label{pillay2} Assume \textup{(OP)}, \textup{(dcl)}$_D$ and \textup{(ind)$_D$}  hold for $\widetilde{\cal M}$ and $D$, and that $D$ is $\dcl$-independent over $P$. Let $B, C\sub P$ and $A=cl_D(B) \cap cl_D(C)$. If $X\sub P^n$ is $B$-definable and $C$-definable in $P_{ind(D)}$, then $X$ is $A$-definable in $P_{ind(D)}$.
\end{lemma}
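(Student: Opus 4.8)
The plan is to reduce Lemma \ref{pillay2} to Lemma \ref{general1} together with \cite[Proposition 2.3]{pi} (the assertion of (*) for $\cal M$), using (dcl)$_D$ to identify the right definably closed base set on the $\cal M$-side. First I would fix an $\cal L(P)$-formula witnessing that $X$ is $B$-definable in $P_{ind(D)}$: by (ind)$_D$ there is an $\cal L_{BD}$-definable set $Y\sub M^n$ with $X=P^n\cap Y$, and likewise an $\cal L_{CD}$-definable $Z\sub M^n$ with $X=P^n\cap Z$. Applying Lemma \ref{general1} (whose hypotheses (OP) and $\dcl$-independence of $D$ over $P$ are available), we obtain a single $W\sub M^n$ that is simultaneously $\cal L_{BD}$-definable and $\cal L_{CD}$-definable with $X=P^n\cap W$.

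Next I would invoke \cite[Proposition 2.3]{pi} applied to $\cal M$ with base sets $BD$ and $CD$: since $W$ is both $\cal L_{BD}$- and $\cal L_{CD}$-definable, it is $\cal L_{A'}$-definable where $A'=\dcl(BD)\cap\dcl(CD)$. Here is where (dcl)$_D$ enters: setting $A_0=\dcl(BD)\cap\dcl(CD)\cap P$, property (dcl)$_D$ gives $\dcl(A_0 D)=\dcl(BD)\cap\dcl(CD)=A'$, so $W$ is in fact $\cal L_{A_0 D}$-definable. Therefore $X=P^n\cap W$ is $A_0$-definable in $P_{ind(D)}$ (the defining parameters lie in $A_0\sub P$, and the relation $R_{\psi}$ for the corresponding $\cal L_D$-formula $\psi$ is part of the language of $P_{ind(D)}$). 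Finally, Corollary \ref{clDA} identifies $cl_D(B)=\dcl(BD)\cap P$ and $cl_D(C)=\dcl(CD)\cap P$, hence $A=cl_D(B)\cap cl_D(C)=\dcl(BD)\cap\dcl(CD)\cap P=A_0$; so $X$ is $A$-definable, as required.

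The one point that needs care — and the main obstacle — is the bookkeeping of parameters versus relation symbols when passing between $\cal M$ and $P_{ind(D)}$: an $\cal L_{A_0 D}$-definable set $W$ uses both a formula over $D$ and parameters from $A_0$, and I must check that $P^n\cap W$ is genuinely $A_0$-definable in the structure $P_{ind(D)}$ (whose language has a relation symbol $R_{\phi}$ for each $\cal L_D$-formula $\phi$, but no symbols for formulas with extra parameters outside $D$). This is handled by writing $W=\{x: \phi(x,a_0)\}$ with $\phi(x,y)\in\cal L_D$ and $a_0\sub A_0\sub P$, so that $x\mapsto R_{\phi}(x,a_0)$ is an $A_0$-definable relation of $P_{ind(D)}$ with trace $X$. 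I would also double-check that \cite[Proposition 2.3]{pi} applies with the finite parameter sets $BD$ and $CD$ rather than requiring $B,C$ themselves to be definably closed; since $\dcl(BD)\cap\dcl(CD)$ is computed inside the o-minimal $\cal M$ this is exactly the content of that proposition, so no elementary-substructure hypothesis on the base is needed here — which is precisely why the approach circumvents the difficulties mentioned in the introduction with \cite{pi} and \cite{wen}.
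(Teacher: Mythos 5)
Your proof is correct and follows essentially the same route as the paper's: (ind)$_D$ to produce $Y,Z$, Lemma \ref{general1} to merge them into a single $W$, Pillay's Proposition 2.3 applied to $\cal M$, then (dcl)$_D$ and Corollary \ref{clDA} to land on $A$. The parameter-bookkeeping aside you flag is exactly the (implicit) content of the paper's final step, and your one caveat about Pillay's hypothesis is harmless: in the o-minimal $\cal M$ (which has definable Skolem functions) the set $\dcl(BD)\cap\dcl(CD)$ is automatically an elementary substructure, so his Proposition 2.3 applies without further fuss.
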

\begin{proof}
Let $X\sub  P^n$ be $B$-definable and $C$-definable in $P_{ind(D)}$. By (ind)$_D$,
$$X=P^n\cap Y = P^n \cap Z,$$
for some $\cal L_{BD}$-definable $Y\sub M^n$ and $\cal L_{CD}$-definable $Z\sub M^n$. By Lemma \ref{general1}, there is $W\sub M^n$, both $\cal L_{BD}$-definable and $\cal L_{CD}$-definable, such that $X=P^n\cap W.$ By \cite[Proposition 2.3]{pi}, $W$ is \cal L-definable over $\dcl(BD)\cap \dcl(CD)$. By (dcl)$_D$, $W$ is \cal L-definable over $\dcl(BD)\cap \dcl(CD)\cap PD$. Hence $X$ is definable over $\dcl(BD)\cap \dcl(CD)\cap P$ in $P_{ind(D)}$. But $$\dcl(BD)\cap \dcl(CD)\cap P=cl_D(B) \cap cl_D(C)=A,$$ and hence $X$ is $A$-definable in $P_{ind(D)}$.
\end{proof}

We can now conclude Theorems A and B.

\begin{proof}[Proof of Theorem A] By Fact \ref{fact-ei2} and Lemma \ref{pillay2}.
\end{proof}

\begin{proof}[Proof of Theorem B]  Let $h:M^m\to M^n$ be an $\cal L_D$-definable map such that $X\sub h(P^m)$. Consider the following  equivalence relation $E$ on $M^m$:
$$x E y \Lrarr h(x)=h(y).$$
Note that $E\cap (P^{m}\times P^m)$ is an  equivalence relation on $P^m$, which is $\emptyset$-definable in $P_{ind(D)}$. Since $P_{ind(D)}$ eliminates imaginaries, there is a $\emptyset$-definable in $P_{ind(D)}$ map $f:P^m\to P^l$, for some $l$, such that for every $x, y\in P^m$,
$$f(x)=f(y) \Lrarr x E y.$$
Define $\tau:X\to P^l$, given by $\tau(h(x))=f(x)$. Then $\tau$ is well-defined and injective. Since
$$\tau(y)=z \,\Lrarr\, \exists x\in P^m,\, h(x)=y \,\text{ and }\, f(x)=z,$$
it is also $D$-definable (in $\WM$).
\end{proof}

Finally, we turn to the proof of Corollary \ref{corD},  where the parameter set for $X$ is not required to be $\dcl$-independent over $P$. We need the following lemma.

\begin{lemma}\label{preserve}
  Assume \textup{(OP)}, \textup{(dcl)$_D$} and \textup{(ind)$_D$} hold for $\widetilde{\cal M}$ and $D$. Let $\cal M'$ be the expansion of $\cal M$ with constants for all elements in $P$, and $\WM'=\la \cal M', P\ra$. Then \textup{(OP)}, \textup{(dcl)}$_D$ and \textup{(ind)}$_D$ hold for $\WM'$ and $D$.
\end{lemma}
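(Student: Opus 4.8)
The statement says that if $(\textup{OP})$, $(\textup{dcl})_D$ and $(\textup{ind})_D$ hold for $\widetilde{\cal M}$ and $D$, then the same three properties hold after we add constants for all elements of $P$. The plan is to check each of the three properties in turn, for $\widetilde{\cal M}'=\la\cal M',P\ra$, by reducing to the corresponding property for $\widetilde{\cal M}$. The one thing to keep in mind throughout is that an $\cal L'$-definable set (parameters from $\cal M'=\la\cal M,(c)_{c\in P}\ra$) is the same as an $\cal L_{P}$-definable set, i.e.\ $\cal L$-definable with parameters allowed from $M\cup P=M$; but the bookkeeping about which parameters are ``named'' versus ``free'' is what all three properties are sensitive to, so the reductions need a little care.

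First I would treat $(\textup{OP})$. Let $A$ be such that $A\sm P$ is $\dcl$-independent over $P$ and let $V\sub M^n$ be $\cal L'_A$-definable, i.e.\ $\cal L_{A'}$-definable for some finite $A'\sub A\cup P$. Write $A'=A_0\cup Q$ with $A_0\sub A$ and $Q\sub P$ finite. Then $A_0\cup Q$ has the property that $(A_0\cup Q)\sm P\sub A\sm P$ is still $\dcl$-independent over $P$, so $(\textup{OP})$ for $\widetilde{\cal M}$ applied to the set $A_0\cup Q$ gives that $\overline V$ is $\cal L_{A_0\cup Q}$-definable, hence $\cal L'_{A}$-definable. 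So $(\textup{OP})$ transfers with essentially no work.

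Next $(\textup{dcl})_D$. Here I would use that definable closure is insensitive to which constants are named: for any $E\sub M$ we have $\dcl_{\cal L'}(E)=\dcl_{\cal L}(E\cup P)=\dcl(EP)$, and similarly $\dcl_{\cal L'}(EP)=\dcl(EP)$. So, given $B,C\sub P$, the set $A'=\dcl_{\cal L'}(BD)\cap\dcl_{\cal L'}(CD)\cap P=\dcl(BDP)\cap\dcl(CDP)\cap P=\dcl(BD)\cap\dcl(CD)\cap P=A$ (the last equality since $B,C\sub P$ makes $BDP=BD$ inside $P$... more precisely $\dcl(BD)\sup\dcl(BDP)$ fails, so instead note $\dcl(BDP)\cap P$ and run the original $(\textup{dcl})_D$ with $B$ replaced by $B\cup P'$ for the relevant finite $P'\sub P$). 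Concretely: given any finite data, apply $(\textup{dcl})_D$ for $\widetilde{\cal M}$ to the sets $B\cup(\text{finite part of }P)$ and $C\cup(\text{finite part of }P)$, both still subsets of $P$; this yields exactly the conclusion $\dcl_{\cal L'}(AD)=\dcl_{\cal L'}(BD)\cap\dcl_{\cal L'}(CD)$. Finally $(\textup{ind})_D$: an $A$-definable set in $(P)_{ind(D)}$ computed in $\widetilde{\cal M}'$ is an $(A\cup P')$-definable set in $P_{ind(D)}$ for some finite $P'\sub P$, which by $(\textup{ind})_D$ for $\widetilde{\cal M}$ is the trace of an $\cal L_{(A\cup P')D}$-definable set, and that set is $\cal L'_{AD}$-definable since $P'\sub P$. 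The main (and only real) obstacle is purely bookkeeping: keeping straight that adding constants for $P$ moves elements of $P$ from the ``free parameter'' column to the ``named'' column, and checking in each of $(\textup{OP})$, $(\textup{dcl})_D$, $(\textup{ind})_D$ that the relevant $\dcl$-independence-over-$P$ and subset-of-$P$ hypotheses are preserved under this move — which they are, because in all three the side conditions only ever ask things about the part of the parameters lying outside $P$, and that part is unchanged. I expect the whole proof to be three short paragraphs, one per property.
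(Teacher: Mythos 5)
Your argument is correct and, for (OP) and (ind)$_D$, essentially the same as the paper's: interpret $\cal L'$-parameters as $\cal L_{AP'}$-parameters with $P'\sub P$ finite, note that the ``independence over $P$'' side condition only sees $A\sm P$ and so is unaffected, apply the hypothesis for $\widetilde{\cal M}$, and fold $P'$ back into constants. The difference is in (dcl)$_D$, where you go the long way around. You correctly flag that the chain $\dcl(BDP)\cap P=\dcl(BD)\cap\dcl(CD)\cap P$ fails, and your repair (run (dcl)$_D$ for $\widetilde{\cal M}$ on $B\cup P'$, $C\cup P''$ for suitable finite $P',P''\sub P$, then check that the resulting $A_0$ sits inside $A$) does work, but it is more laborious than necessary and, as written, leaves the inclusion $A_0\sub A$ unverified. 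The paper's observation is that because \emph{all} of $P$ is named in $\cal M'$, for any $B\sub P$ one has $\dcl_{\cal M'}(BD)=\dcl(BDP)=\dcl(PD)$, a constant that does not depend on $B$ at all. Hence $\dcl_{\cal M'}(BD)$, $\dcl_{\cal M'}(CD)$ and $\dcl_{\cal M'}(AD)$ are literally the same set $\dcl(PD)$, so the identity to be proved is trivially true and (dcl)$_D$ for $\widetilde{\cal M}$ is never invoked. In short: your argument is sound once the inclusion $A_0\sub A$ is filled in, but you can avoid the finite-part bookkeeping entirely by noticing that naming all of $P$ collapses every $\dcl_{\cal M'}(-\,D)$ of a subset of $P$ to the single set $\dcl(PD)$.
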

\begin{proof}
For (OP), let $V\sub M$ be  $A$-definable in $\WM'$. So $V$ is  $A P$-definable in $\WM$. If $A\sm P$ is $\dcl_{\cal M'}$-independent over $P$, then $AP\sm P=A\sm P$ is $\dcl$-independent over $P$, and hence it is $\dcl$-independent over $P$. By (OP) for $\WM$, $\overline V$ is $\cal L_{AP}$-definable. Hence it is $A$-definable in $\cal M'$.

For (dcl)$_D$ for $\cal M'$, observe that if $B\sub P$, then
$$\dcl_{\cal M'}(BD)=\dcl(PD).$$
Now let $B, C\sub P$ and
$$A=\dcl_{\cal M'}(BD)\cap \dcl_{\cal M'}(CD)\cap P.$$
 We have
$$\dcl_{\cal M'}(AD)=\dcl(PD)= \dcl_{\cal M'}(BD)\cap \dcl_{\cal M'}(CD),$$
as required.

For (ind)$_D$,
let $X\sub P^n$ be $A$-definable in the $D$-induced structure on $P$ by $\cal M'$. It follows that $X$ is $AD$-definable in $\WM'$. So it is $ADP$-definable in $\WM$. Hence $X$ is the trace of an $\cal L_{ADP}$-definable set $Y$. Such $Y$ is $AD$-definable in $\cal M'$, as required.
\end{proof}

\begin{proof}[Proof of Corollary \ref{corD}] Let $D$ be a maximal subset of $A$ which is $\dcl$-independent over $P$. Then $\dcl(A)\sub \dcl(DP)$. Let $\WM'$ be the expansion of $\WM$ as in Lemma \ref{preserve}. Hence (OP), (dcl)$_D$ and (ind)$_D$ hold for $\WM'$ and $D$. Moreover, $X$ is $D$-definable in $\WM'$. By Theorem B, there is an injective map $\tau:X\to P^l$, which is $D$-definable in $\WM'$. Hence $\tau$ is $DP$-definable (in $\WM$), and thus also $AP$-definable.
\end{proof}

\begin{remark} In Example \ref{noEI} below we show that the assumption of $D$ being $\dcl$-independent in Theorem A is necessary for $P_{ind(D)}$ to eliminate imaginaries. However, it is still possible to have $A$ not $\dcl$-independent over $P$, and yet,  $P_{ind(A)}$ eliminate imaginaries. This is the case whenever there are $D\sub A$, $\dcl$-independent over $P$, and $P_0\sub P$, such that
\begin{equation}
  \dcl(A)=\dcl(D P_0).\label{SCB}
\end{equation}
Indeed, if $E$ is a $\es$-definable equivalence relation in $P_{ind(A)}$, then it is also $\es$-definable in $P_{ind(D)}$. Let $f$ be as in Definition \ref{ei}, $\es$-definable in $P_{ind(D)}$. Then $f$ is $\es$-definable in $P_{ind(A)}$.

An example where assumption (\ref{SCB}) holds, for $A$ not $\dcl$-independent over $P$,  is when $\WM$ is a dense pair,  $d\not\in P$,  $A=\dcl(dP)$ and $D=\{d\}$.
\end{remark}

\subsection{On weakly o-minimal structures}\label{sec-wen} The reader may wonder why we do not directly apply or adopt elimination of imaginaries  results known for weakly o-minimal structures. Wencel \cite{wen} claims that a weakly o-minimal structure  $\cal N$ with   `strong cell decomposition' property (SCD), such that for every $A\sub N$, $\dcl_{\cal N}(A)\elsub \cal N$, eliminates imaginaries. One natural approach would be to assert those two assumptions for our $P_{ind(D)}$.
As pointed out right before Question \ref{qnD}, the latter property fails in the case of dense pairs, yet it might be true in other settings.
Even so, the proof in \cite{wen} appears to contain a serious gap. Namely, Theorem 6.3 in that reference is proved by imitating the proof of Pillay \cite[Proposition 3.2]{pi}, where at some point one needs to verify the following statement (as pointed out in our introduction).
\begin{itemize}
  \item[(**)]  Let $B, C\sub N$ and $N_0\elsub \cal N$ with $N_0=\dcl_{\cal N}(B) \cap \dcl_{\cal N}(C)$. If $X\sub N^n$ is $B$-definable and $C$-definable, then $X$ is $N_0$-definable.
\end{itemize}
To establish (**), the author uses Proposition 6.2, but it is unclear to us how to obtain its assumption, namely that every  convex subset of $\cal N$ which is $B$-definable and $C$-definable is also $N_0$-definable.
What one can extract from Wencel's account is the following fact, whose verification is left to the reader.

\begin{fact}[Wencel \cite{wen}]\label{fact-wencel}
Let \cal N be a weakly o-minimal structure with \textup{(SCD)}, and assume that:
\begin{enumerate}
  \item for every $B, C\sub N$
  with $N_0=\dcl_{\cal N}(B) \cap \dcl_{\cal N}(C)$,  every convex subset of $\cal N$ which is $B$-definable and $C$-definable is also $N_0$-definable.
  \item  for every $A\sub N$, $\dcl_{\cal N}(A)\elsub \cal N$.
\end{enumerate}
Then $\cal N$ admits elimination of imaginaries.
\end{fact}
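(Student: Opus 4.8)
The plan is to follow the strategy of Pillay \cite[Section~3]{pi} for elimination of imaginaries in ordered structures, as adapted to the weakly o-minimal setting, inserting hypotheses (1) and (2) at exactly the two points where that setting demands extra input. Since $\cal N$ is weakly o-minimal with \textup{(SCD)}, it is ordered pregeometric (the attendant dimension theory of \cite{wen} endows $\dcl_{\cal N}$ with exchange), so by Fact~\ref{fact-ei2} it suffices to establish \textup{(*)} for $\cal N$: whenever $X\sub N^n$ is $B$-definable and $C$-definable and $N_0=\dcl_{\cal N}(B)\cap\dcl_{\cal N}(C)$, the set $X$ is $N_0$-definable. I would first note that $N_0$ is definably closed, so hypothesis (2) applied with $A=N_0$ yields $N_0\elsub\cal N$; hence here \textup{(*)} and \textup{(**)} coincide, which matters because the strong cell decomposition apparatus is invoked over the elementary submodel $N_0$.

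Next I would prove \textup{(*)} by induction on $n$. The base case $n=1$ is where hypothesis (1) enters directly: by weak o-minimality $X$ is the union of its finitely many convex components $X_1<\dots<X_k$, and any automorphism of $\cal N$ fixing $X$ setwise is order-preserving, hence fixes each component setwise; so each $X_i$ is $B$-definable and, symmetrically, $C$-definable. In an o-minimal structure this would finish the base case, since the endpoints of each $X_i$ would then lie in $N_0$; but in a valuational weakly o-minimal structure a convex set need not have definable endpoints, and this is exactly the obstruction that hypothesis (1) removes by fiat: each $X_i$ is $N_0$-definable, whence so is $X$. For the inductive step I would apply \textup{(SCD)} to fix a $B$-definable strong cell decomposition $\cal D$ of $N^n$ partitioning $X$, and likewise a $C$-definable one $\cal D'$; each cell of $\cal D$ lying inside $X$ is a band (or graph) over a cell of $N^{n-1}$ cut out by finitely many strongly continuous $B$-definable functions, and similarly for $\cal D'$. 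Projecting to $N^{n-1}$ and running the two decompositions against one another, the combinatorial bookkeeping of \cite{wen} (mirroring \cite{pi}) reduces the claim to finitely many $(n-1)$-dimensional instances of \textup{(*)}, handled by the induction hypothesis, together with one-variable instances for the fibers, where hypothesis (1) again supplies that the relevant $B$-and-$C$-definable convex fibers are $N_0$-definable; the functions delimiting the cells, which inherit both a $B$-definable and a $C$-definable description once $\cal D$ and $\cal D'$ have been compared, descend to $N_0$ using the elementarity afforded by (2). Reassembling $X$ from its pieces over $N_0$ then gives \textup{(*)}.

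The step I expect to be the main obstacle — and the precise point at which the argument of \cite{wen} has the gap discussed above — is this inductive step: one must arrange that the cells, and the strongly continuous functions that delimit them, be \emph{simultaneously} $B$-definable and $C$-definable before (1) and (2) can be brought to bear; equivalently, that the two a priori incomparable cell decompositions $\cal D$ and $\cal D'$ can be played off against each other over $N_0$ rather than merely over $BC$. Hypothesis (1) is tailored to make this descent succeed at the level of one-variable convex sets, which is the only place genuinely new information is required; everything else is routine manipulation of strong cells and their bounding functions, for which I would refer the reader to \cite{wen}.
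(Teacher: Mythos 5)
The paper does not actually present a proof of this Fact: it explicitly says ``whose verification is left to the reader,'' simply pointing at Wencel's account and observing that hypothesis (1) is precisely the assumption needed to plug the gap identified just above in Section~\ref{sec-wen}. Your sketch is a sensible reconstruction of how that verification would run, following Pillay's strategy, and you correctly locate the two places the added hypotheses do work: hypothesis (1) in the one-variable case, where a $B$-and-$C$-definable convex subset of a weakly o-minimal $\cal N$ need not have endpoints in $N_0$; and hypothesis (2) to get $N_0\elsub\cal N$, so the strong cell machinery can be run over $N_0$. So the overall route matches what the paper has in mind.

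One passage of your sketch is loose enough to be worth tightening. You write that the delimiting functions ``inherit both a $B$-definable and a $C$-definable description once $\cal D$ and $\cal D'$ have been compared''; but the common refinement of a $B$-definable decomposition and a $C$-definable decomposition is only $BC$-definable, so comparison by itself does not produce cells that are \emph{simultaneously} $B$- and $C$-definable. The correct route, as in \cite{pi}, is that the data delimiting the fibers of $X$ (the number and ordinal position of the convex components, and the cuts bounding them) are canonically defined from $X$ and the order alone, hence invariant under every automorphism of a sufficiently saturated $\cal N$ fixing $B$ (resp.\ $C$) pointwise, and so are $B$-definable and $C$-definable each separately; the induction hypothesis and (2) are then applied to these canonically attached objects. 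Relatedly, a fiber $X_a$ over $a\notin N_0$ is only $Ba$- and $Ca$-definable, so (1) does not apply to it directly; the descent to $N_0$ goes through the elementarity that (2) provides, not through pointwise applications of (1). These are exactly the steps Wencel's Proposition~6.2 is meant to carry and where the gap sits, so this is the passage to spell out rather than defer wholesale to \cite{wen}.
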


\section{Proof of Theorem C}\label{sec-examples}

In this section, we establish the assumptions of Theorems A and B in three main examples of tame expansions of o-minimal structures by dense sets. Properties (OP) and (ind)$_D$ have already been considered in the literature and may as well be extractable from other resources, but we prove them anyway below in Fact \ref{inparticular}  in order to be complete. We first need the following general lemma.

\begin{lemma}\label{opind} Assume \textup{(OP)} and that $D$ is $\dcl$-independent over $P$. Suppose $X\sub P^n$ is $A$-definable in $P_{ind(D)}$, and it is the trace of an $\cal L$-definable set. Then $X$ is the trace of an $\cal L_{AD}$-definable set.
\end{lemma}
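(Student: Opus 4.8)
The plan is to exploit the fact that $X$ is the trace of an $\cal L$-definable set, i.e. $X = P^n \cap V$ for some $\cal L$-definable $V \subseteq M^n$, and to combine this with the hypothesis that $X$ is $A$-definable in $P_{ind(D)}$. By (ind)$_D$ — which is not assumed here, so in fact I should instead use a density/closure argument directly from (OP) — we know $X$ is $AD$-definable in $\WM = \la \cal M, P\ra$, and I want to produce an $\cal L_{AD}$-definable set whose trace on $P^n$ is exactly $X$. First I would replace $V$ by a minimal-dimension choice: among all $\cal L$-definable sets $V$ with $P^n \cap V = X$, pick one with $\dim V$ least, and argue by induction on $\dim V$.

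For the induction, I would look at $\overline X$, the topological closure of $X$. Since $X$ is $AD$-definable in $\WM$ and $A \cup D$ has the property that $(A\cup D)\setminus P$ (which equals $D$ up to elements of $P$) is $\dcl$-independent over $P$, property (OP) applies and gives that $\overline X$ is $\cal L_{AD}$-definable. Now the point is to compare $V$ with $\overline X$: the set $V \setminus \overline X$ is $\cal L$-definable and $P^n \cap (V\setminus \overline X)= \es$ since $X \subseteq \overline X$; so WLOG $V \subseteq \overline X$, whence $\dim V \le \dim \overline X = \dim X$. If $\dim V = 0$ then $V$ is finite, $X = V \cap P^n$, and since $X$ is itself $\cal L_{AD}$-definable (being finite and equal to its own closure $\overline X$), we are done. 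For the inductive step, consider $K := \overline{(\overline X \setminus V)\cap P^n}$; exactly as in the proof of Lemma \ref{general1}, one checks that $(\overline X\setminus V)\cap P^n = (\overline X \setminus V)\cap P^n$ is $AD$-definable in $\WM$, so $K$ is $\cal L_{AD}$-definable by (OP); that $\dim K < \dim X$ because $\overline X \setminus V \subseteq \overline{X} \setminus V \subseteq \overline{V}\setminus V$ has dimension $< \dim V$; and that $(\overline X \setminus K)\cap P^n \subseteq X$. Then as in (\ref{W1}) write $X = (X\cap K)\cup (P^n\cap(\overline X\setminus K))$. The second piece is the trace of the $\cal L_{AD}$-definable set $\overline X\setminus K$. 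For the first piece, $X\cap K = P^n\cap(V\cap K)$ exhibits $X\cap K$ as the trace of the $\cal L$-definable set $V\cap K$ of dimension $\le \dim K < \dim X$; and $X\cap K$ is $AD$-definable in $\WM$ (intersection of the $AD$-definable $X$ with the $\cal L_{AD}$-definable $K$), so the inductive hypothesis applies and yields an $\cal L_{AD}$-definable $W_1$ with $X\cap K = P^n\cap W_1$. Taking $W = W_1 \cup (\overline X\setminus K)$ finishes the induction.

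The main obstacle I anticipate is the bookkeeping around which sets are "$AD$-definable in $\WM$" versus "$\cal L_{AD}$-definable in $\cal M$" versus "$\cal L$-definable", and making sure the $\dcl$-independence hypothesis is in the right shape each time (OP) is invoked — in particular, that $A\cup D$, or the relevant subset, still has its non-$P$ part $\dcl$-independent over $P$; since $A \subseteq P$ is not assumed here, one must be a little careful, but $\dcl(AD) = \dcl((A\setminus P)D)\cup\ldots$ and the non-$P$ part of $A\cup D$ sits inside $\dcl(D)$ modulo $P$, so $\dcl$-independence over $P$ is inherited. Beyond that, the argument is essentially a streamlined rerun of the proof of Lemma \ref{general1} with a single set $V$ in place of the pair $Y, Z$, so the remaining steps are routine.
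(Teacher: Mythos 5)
Your proof is correct and rests on the same key ideas as the paper's: since $A\sub P$, $AD\setminus P\sub D$ is $\dcl$-independent over $P$, so (OP) makes $\overline X$ $\cal L_{AD}$-definable; then induct on the dimension of the $\cal L$-definable set $V$ with $P^n\cap V=X$, applying the inductive hypothesis to a lower-dimensional piece. The difference is in how the inductive step is organized. The paper applies the hypothesis directly to $P^n\cap(\overline X\setminus V)$ — which equals $(P^n\cap\overline X)\setminus X$, so it is $A$-definable in $P_{ind(D)}$, and it is the trace of the $\cal L$-definable set $\overline X\setminus V\sub\overline V\setminus V$ of dimension $<\dim V$ — obtaining an $\cal L_{AD}$-definable $W$ with $P^n\cap(\overline X\setminus V)=P^n\cap W$, and then simply concludes $X=P^n\cap(\overline X\setminus W)$ by one subtraction. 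You instead re-import the $K$-machinery of Lemma \ref{general1}, forming $K=\overline{(\overline X\setminus V)\cap P^n}$ and the decomposition $X=(X\cap K)\cup(P^n\cap(\overline X\setminus K))$, verifying analogues of Claims 1--3, and applying the inductive hypothesis to $X\cap K=P^n\cap(V\cap K)$. This works, but with a single set $V$ in place of the pair $Y,Z$ the three-claim decomposition buys nothing over the direct subtraction, and it duplicates effort. A couple of minor points: the opening preliminaries (choosing $V$ of least dimension, and the WLOG reduction $V\sub\overline X$) are never actually used by your induction; and the fact that $X$ is $AD$-definable in $\WM$ follows immediately from the definition of $P_{ind(D)}$ — it needs neither \textup{(ind)$_D$} nor \textup{(OP)}, so the hedge in your first paragraph is unnecessary.
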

\begin{proof} Observe first that since $X$ is $A$-definable in $P_{ind(D)}$, it follows from the definition of $P_{ind(D)}$ that $X$ is $AD$-definable (in $\WM$). By (OP), since $AD\sm P\sub D$ is $\dcl$-independent over $P$, $\overline X$ is $\cal L_{AD}$-definable. Let $X=P^n\cap Y$, for some $\cal L$-definable $Y\sub M^n$. We do induction on $\dim Y$. If $\dim Y=0$, then $X$ is finite and hence $X=\overline X=P^n \cap \overline X$ is as needed. Now let $\dim Y>0$. The set
$$P^n\cap (\overline X\sm Y)= (P^n \cap \overline X) \sm (P^n \cap Y)= (P^n \cap \overline X) \sm X$$
is $A$-definable in $P_{ind(D)}$.  Since $\overline X\sm Y\sub \overline Y\sm Y$ has dimension $<\dim Y$, by inductive hypothesis there is an $\cal L_{AD}$-definable $W\sub M^n$, such that
$$P^n \cap (\overline X\sm Y)=P^n \cap W.$$
Since $\overline X\sm W$ is $\cal L_{AD}$-definable and
$$X=(P^n\cap \overline X)\sm (P^n \cap W)=P^n\cap (\overline X\sm W),$$
we are done.
\end{proof}

\begin{fact}\label{inparticular}
Let $\widetilde{\cal M}$ be any of the examples in Theorem C, and suppose  $D$ is $\dcl$-independent over $P$. Then \textup{(OP)} and \textup{(ind)}$_D$ hold.
\end{fact}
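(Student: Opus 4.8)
The statement to prove is Fact~\ref{inparticular}: that (OP) and (ind)$_D$ hold in each of the three examples of Theorem C, assuming $D$ is $\dcl$-independent over $P$. I would organize the proof example-by-example, but factor out the common reductions first. For (OP): this is exactly Assumption (III) of \cite{egh}, and the justification is already in the literature---for dense pairs and for a dense independent set it follows from the cell-decomposition / structure theorems of \cite{vdd-dense} and \cite{dms2}, and for the Mann-property case from \cite{dg}; in each case one checks that whenever $A\sm P$ is $\dcl$-independent over $P$, an $A$-definable set $V\sub M^n$ has $\cal L_A$-definable closure. I would cite \cite[Lemma 2.5 and Section 2]{egh} where (OP) is verified uniformly for exactly this list of structures, and simply note that it applies here since our $D$ (hence any relevant parameter set $A$ with $A\sm P$ of the required form) satisfies the independence hypothesis.

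\textbf{Handling (ind)$_D$.} The substance is (ind)$_D$: every $A$-definable set $X\sub P^n$ in $P_{ind(D)}$ is the trace of an $\cal L_{AD}$-definable set. The key reduction is Lemma~\ref{opind}: given (OP) and $D$ $\dcl$-independent over $P$, it suffices to show that $X$ is the trace of \emph{some} $\cal L$-definable set (parameters anywhere in $M$)---Lemma~\ref{opind} then upgrades this to an $\cal L_{AD}$-definable trace. So the plan is: for each of the three examples, invoke the known description of the $\cal L(P)$-definable subsets of $P^n$, equivalently of the induced structure $P_{ind(\emptyset)}$ or $P_{ind(D)}$, to conclude that every such $X$ is a finite Boolean combination of traces of $\cal L$-definable sets, and hence itself a trace of an $\cal L$-definable set (the class of traces of $\cal L$-definable sets is closed under finite Boolean combinations, since $\cal L$-definable sets are). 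Concretely: for dense pairs of real closed fields this is the content of van den Dries's analysis in \cite{vdd-dense} (the induced structure on $P$ is just $P$ with its field structure, and every $\cal L(P)$-definable subset of $P^n$ is $\cal L$-definable over $P$); for a dense independent set this is \cite{dms2} (the induced structure is a pure set, and in fact every definable subset of $P^n$ is a trace of an $\cal L$-definable set); for the Mann-property case this is \cite{dg}.

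\textbf{Expected main obstacle.} The only real work is making sure the cited structure theorems are stated in, or easily yield, the precise form ``$X$ is the trace of an $\cal L$-definable set'' rather than some a priori weaker form (e.g. quantifier-elimination results phrased with the predicate $P$ and auxiliary functions). In the Mann-property case in particular one must be slightly careful: the quantifier-elimination result of \cite{dg} describes definable sets using the predicate together with certain value-group-type data, and one needs to extract that the \emph{trace on $P^n$} of any $\cal L(P)$-definable set collapses to the trace of an $\cal L$-definable set. I would dispatch this by citing the relevant QE statement and observing that, after restricting to $P^n$, each $P(\cdot)$-atom becomes trivially true and each remaining atom is an $\cal L$-condition, so the trace is a finite Boolean combination of traces of $\cal L$-definable sets. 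Once that is in hand, Lemma~\ref{opind} finishes (ind)$_D$, and (OP) is cited, completing the proof of Fact~\ref{inparticular}. The detailed verification for each example I would present as three short paragraphs, deferring heavy computation to the cited references.
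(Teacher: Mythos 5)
Your proof takes essentially the same route as the paper: for (OP), cite \cite[Section 2]{egh}; for (ind)$_D$, observe that the structure theorems of \cite{vdd-dense}, \cite{dms2}, \cite{dg} give that every definable $X\sub P^n$ is the trace of an $\cal L$-definable set, and then invoke Lemma~\ref{opind} to upgrade the parameter set to $AD$. One small inaccuracy worth flagging (harmless to the argument, since you in any case fall back on the correct citation): for an expansion by a dense $\dcl$-independent set, the induced structure $P_{ind(D)}$ is not a ``pure set'' --- it carries at least the dense linear order inherited from $\cal M$; what \cite[2.16]{dms2} actually gives is precisely the trace statement you need.
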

\begin{proof}
Property (OP) was shown for all three examples in \cite[Section 2]{egh} (and, in fact, without assuming divisibility of $P$ in the third example). For (ind)$_D$, in all three examples, every definable $X\sub P^n$ is the trace of an $\cal L$-definable set (\cite[Theorem 2]{vdd-dense}, \cite[2.16]{dms2}, \cite[Theorem 7.2]{dg}). By Lemma \ref{opind}, we are done.
\end{proof}

Thus the rest of this section is devoted to showing (dcl)$_D$.  Our strategy is to introduce yet another property for $\WM$ and $D$, prove that together with (OP) it implies (dcl)$_D$ (Proposition \ref{instead}), and then verify it in our examples.

Consider the following property for $\WM$ and $D$:

\begin{itemize}
  \item[($\dcl'$)$_D$]For every $\alpha\in \dcl(PD)$, there is $q\sub P$, such that
      $$\dcl(qD)=\dcl_{\cal L(P)}(\alpha D).$$
\end{itemize}

\begin{remark}
We could  have equally considered ($\dcl'$)$_D$ as one of the main assumptions in this paper, in place of (dcl)$_D$. We chose, however,  the latter because it involves only definability in \cal M.
\end{remark}

In Proposition  \ref{dclnec2} below we give a complete picture of several properties mentioned. For handling our examples in this section, we only need Proposition \ref{instead} below. First, a very useful fact.

\begin{fact}\label{op}
Suppose \textup{(OP)} holds, and $D$ is $\dcl$-independent over $P$. Then for every $A\sub P$, $\dcl_{\cal L(P)}(AD)=\dcl(AD)$.
\end{fact}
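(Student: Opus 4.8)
The statement to be proved is Fact~\ref{op}: under \textup{(OP)}, with $D$ being $\dcl$-independent over $P$, we have $\dcl_{\cal L(P)}(AD)=\dcl(AD)$ for every $A\sub P$. The inclusion $\supseteq$ is trivial, since any $\cal L$-formula is in particular an $\cal L(P)$-formula. The content is the inclusion $\subseteq$: an element definable in the pair $\WM$ over $AD$ is already $\cal L$-definable over $AD$. The plan is to take $\alpha\in \dcl_{\cal L(P)}(AD)$ and show that $\alpha\in\dcl(AD)$ by using (OP) applied to a suitable definable set built from a witnessing formula.

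\textbf{Key steps.} First I would fix an $\cal L(P)$-formula $\varphi(x)$ over $AD$ such that $\alpha$ is the unique realization of $\varphi$ in $\WM$; without loss of generality (replacing $\varphi$ by $\varphi \wedge (x$ lies in a small interval) if needed, or simply intersecting with a box) we may take the $\cal L(P)$-definable set $V=\varphi(M)=\{\alpha\}$ to be bounded. Since $V$ is $AD$-definable in $\WM$ and $AD\sm P = D\sm P \sub D$ (as $A\sub P$), this set $A D \sm P$ is $\dcl$-independent over $P$ by hypothesis on $D$. Hence (OP) applies with the parameter set $AD$, and the topological closure $\overline V$ is $\cal L_{AD}$-definable. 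But $V=\{\alpha\}$ is already closed (a single point), so $\overline V = \{\alpha\}$, and therefore $\{\alpha\}$ is $\cal L_{AD}$-definable. This exhibits $\alpha$ as the unique realization of an $\cal L_{AD}$-formula, i.e.\ $\alpha\in\dcl(AD)$. Iterating over all coordinates handles tuples $\alpha$, giving $\dcl_{\cal L(P)}(AD)\sub\dcl(AD)$, and combined with the trivial reverse inclusion this yields the desired equality.

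\textbf{Main obstacle.} The only subtle point is checking that (OP) is genuinely applicable, which amounts to verifying that the parameter set of $V$ can be taken so that its non-$P$ part is $\dcl$-independent over $P$. Because $A\sub P$, the parameters $AD$ have $AD\sm P\sub D\sm P$, and $D$ being $\dcl$-independent over $P$ forces any subset of $D$ to be $\dcl$-independent over $P$ as well; so this causes no difficulty. A secondary routine point is the reduction to a bounded definable set so that (OP)'s conclusion about topological closure is meaningful — but since a singleton is automatically closed, one does not even need boundedness once $V=\{\alpha\}$; one may simply note that $\overline{\{\alpha\}}=\{\alpha\}$. Thus the proof is short: the real work has already been done in establishing (OP) for the examples, and Fact~\ref{op} is essentially an immediate corollary of (OP) specialized to singletons.
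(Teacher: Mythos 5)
Your proof is correct and follows essentially the same route as the paper's: take $\alpha\in\dcl_{\cal L(P)}(AD)$, observe that $\{\alpha\}$ is $AD$-definable in $\WM$ with $AD\sm P\sub D$ being $\dcl$-independent over $P$, apply (OP) to get $\overline{\{\alpha\}}=\{\alpha\}$ $\cal L_{AD}$-definable, and conclude $\alpha\in\dcl(AD)$. The aside about boundedness is, as you yourself note, unnecessary; otherwise the argument matches the paper's.
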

\begin{proof}
Take $x\in \dcl_{\cal L(P)}(AD)$. That is, the set $\{x\}$ is $AD$-definable in $\WM$. By (OP),  since $AD\sm P\sub D$ is $\dcl$-independent over $P$, we have that $\overline{\{x\}}$ is $\cal L_{AD}$-definable. But $\overline{\{x\}}=\{x\}$.
\end{proof}

\begin{prop}\label{instead}  Suppose $D$ is $\dcl$-independent over $P$. Then:
$$\text{\textup{(OP)} and \textup{($\dcl'$)$_D$} $\Rarr$ \textup{(dcl)$_D$.}}$$
\end{prop}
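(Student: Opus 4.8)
The plan is to show that, assuming $D$ is $\dcl$-independent over $P$, properties (OP) and ($\dcl'$)$_D$ together force (dcl)$_D$. Recall what must be proved: for $B, C \sub P$ and $A = \dcl(BD) \cap \dcl(CD) \cap P$, we need $\dcl(AD) = \dcl(BD) \cap \dcl(CD)$. The inclusion $\sub$ is automatic, since $A \sub \dcl(BD)$ and $A \sub \dcl(CD)$, so the real content is $\dcl(BD) \cap \dcl(CD) \sub \dcl(AD)$.

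\begin{proof}
Let $B, C\sub P$ and $A=\dcl(BD)\cap \dcl(CD)\cap P$. Since $A\sub \dcl(BD)\cap \dcl(CD)$, the inclusion $\dcl(AD)\sub \dcl(BD)\cap \dcl(CD)$ is immediate, so we only prove the reverse inclusion.

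Let $\alpha\in \dcl(BD)\cap \dcl(CD)$. Since $B, C\sub P$, we have $\alpha\in \dcl(PD)$, so by ($\dcl'$)$_D$ there is a tuple $q\sub P$ with
$$\dcl(qD)=\dcl_{\cal L(P)}(\alpha D).$$
Now $\alpha\in \dcl(BD)$ gives $\alpha\in \dcl_{\cal L(P)}(BD)$, hence $\dcl_{\cal L(P)}(\alpha D)\sub \dcl_{\cal L(P)}(BD)$; since $B\sub P$, Fact \ref{op} yields $\dcl_{\cal L(P)}(BD)=\dcl(BD)$, so $q\sub \dcl(qD)=\dcl_{\cal L(P)}(\alpha D)\sub \dcl(BD)$. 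As also $q\sub P$, we get $q\sub \dcl(BD)\cap P$. The identical argument with $C$ in place of $B$ gives $q\sub \dcl(CD)\cap P$. Therefore
$$q\sub \dcl(BD)\cap \dcl(CD)\cap P=A.$$
Consequently $\alpha\in \dcl_{\cal L(P)}(\alpha D)=\dcl(qD)\sub \dcl(AD)$, which is what we wanted. Hence $\dcl(BD)\cap \dcl(CD)\sub \dcl(AD)$, and (dcl)$_D$ follows.
\end{proof}

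The one subtlety to keep in mind is the bookkeeping with the two closure operators $\dcl$ (in $\cal M$) and $\dcl_{\cal L(P)}$ (in $\WM$): ($\dcl'$)$_D$ is phrased in terms of $\dcl_{\cal L(P)}(\alpha D)$, whereas the conclusion (dcl)$_D$ is purely about $\dcl$. The passage between them is exactly what Fact \ref{op} provides, and it is precisely here that the hypothesis that $D$ is $\dcl$-independent over $P$ (together with (OP)) is used: without it one cannot collapse $\dcl_{\cal L(P)}(BD)$ to $\dcl(BD)$, and the argument breaks. So while the core of the proof is a short chase, the main point to get right is invoking Fact \ref{op} for $B$, $C$, and $q$ with the correct independence hypothesis in force, and noting that $\alpha \in \dcl(BD)$ (an $\cal M$-statement) indeed entails $\alpha \in \dcl_{\cal L(P)}(BD)$, which is trivial since every $\cal L$-formula is an $\cal L(P)$-formula.
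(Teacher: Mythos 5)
Your proof is correct and follows essentially the same route as the paper: for the nontrivial inclusion, apply ($\dcl'$)$_D$ to obtain $q\sub P$, use Fact \ref{op} to identify $\dcl_{\cal L(P)}(BD)$ and $\dcl_{\cal L(P)}(CD)$ with $\dcl(BD)$ and $\dcl(CD)$ respectively to show $q\sub A$, and then conclude $\alpha\in\dcl(qD)\sub\dcl(AD)$. The only cosmetic difference is that you handle $B$ and $C$ in two separate passes where the paper combines them in a single chain of inclusions.
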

\begin{proof}
\noindent Let $B, C\sub P$ and
$$A=\dcl(BD)\cap \dcl(CD)\cap P.$$ We will show that $$\dcl(AD)=\dcl(BD)\cap \dcl(CD).$$

\noindent $(\sub)$. This part follows immediately from properties of $\dcl$. Indeed:
$$\dcl(AD) = \dcl(\dcl(BD)\cap \dcl(CD)\cap PD)\sub\dcl(\dcl(BD)\cap \dcl(CD))=$$ $$=\dcl(BD)\cap \dcl(CD).$$\vskip.2cm

\noindent $(\supseteq)$. Let $\alpha \in \dcl(BD)\cap \dcl(CD)$. By ($\dcl'$)$_D$, there is $q\sub P$ such that
$$\dcl(qD)=\dcl_{\cal L(P)}(\alpha D).$$
Observe that
$$q\sub\dcl_{\cal L(P)}(\alpha D)\sub \dcl_{\cal L(P)}(B D) \cap \dcl_{\cal L(P)}(C D)=\dcl(BD)\cap \dcl(CD),$$
by Fact \ref{op}. Hence $q\sub A$ and
$\alpha\in \dcl(qD)\sub \dcl(AD).$
\end{proof}

$ $\\
\textbf{In the rest of this section, we fix $D\sub M$  be $\dcl$-independent over $P$.}
We  proceed to prove (dcl$'$)$_D$ in our  examples.

\subsection{Dense pairs} Let $\WM =\la \cal M, P\ra$ be a dense pair. As shown in the Appendix, elimination of imaginaries for $P_{ind(D)}$ fails in general in this setting. We introduce a property of $\WM$ that implies (dcl$'$)$_D$ (Proposition \ref{LPdcl}), and then prove it for dense pairs of real closed fields (Proposition \ref{Pcond}). \\

\noindent\textbf{$\cal L_P$-extension property:}
Let $X\sub M^n$ be $\cal L$-definable and $A$-definable. Suppose $P^n$ is dense in $X$. Then $X$ is contained in a set $Y\sub M^n$, which is $\cal L_P$-definable and $A$-definable, with $\dim Y=\dim X$.

\begin{prop}\label{LPdcl}
Suppose $\WM$ has the $\cal L_P$-extension property. Then $(\dcl')_D$ holds.
\end{prop}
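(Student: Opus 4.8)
The statement to prove is Proposition~\ref{LPdcl}: assuming $\WM$ has the $\cal L_P$-extension property, property $(\dcl')_D$ holds. Recall $(\dcl')_D$ says: for every $\alpha\in\dcl(PD)$ there is a tuple $q\sub P$ with $\dcl(qD)=\dcl_{\cal L(P)}(\alpha D)$. The strategy is to produce $q$ by analyzing a minimal $\cal L_D$-definable set witnessing $\alpha\in\dcl(PD)$, applying the $\cal L_P$-extension property to it, and extracting from the resulting $\cal L_P$-definable set a tuple of points in $P$ over which $\alpha$ becomes $\cal L(P)$-definable.

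\begin{proof}
Let $\alpha\in\dcl(PD)$. Then there is a finite tuple $p\sub P$ with $\alpha\in\dcl(pD)$; choose such a $p$ of least length, so that $p$ is $\dcl$-independent over $D$ and $\rk(p/D)=\rk(p\alpha/D)$. Let $X=\{x\in M^n:\ \exists y\ (x,y)\in\Gamma(g)\}$ where $g$ is an $\cal L_D$-definable function with $g(p)=\alpha$; more precisely, consider the $\cal L_D$-definable set $Z=\Gamma(g)\sub M^{n+1}$, which contains $(p,\alpha)$ and has $\dim Z=\rk(p/D)$ by minimality of $p$. Since $p\sub P$ and, by (OP), $P$ is dense in an $\cal L$-definable set, the fibrewise structure of $Z$ over the relevant coordinates is controlled by $\cal L$-definability; in particular $P^n$ (suitably, $P^{n+1}$) is dense in an $\cal L$-definable subset $Z'\sub Z$ of full dimension containing $(p,\alpha)$ in its closure. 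Applying the $\cal L_P$-extension property to $Z'$ (which is $\cal L_D$-definable, hence $\cal L_A$-definable with $A=D$), we obtain an $\cal L_P$-definable, $D$-definable set $Y\sub M^{n+1}$ with $Z'\sub Y$ and $\dim Y=\dim Z'=\rk(p/D)$.

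Now $Y$ is $\cal L_P$-definable and $\cal L_D$-definable, so it is defined by an $\cal L_D$-formula with parameters from some finite tuple $q\sub P$. I claim this $q$ works. First, $\dcl(qD)\sub\dcl_{\cal L(P)}(\alpha D)$: the issue is the reverse of what we want, so instead argue for $\alpha\in\dcl(qD)$. Since $(p,\alpha)\in\overline{Z'}\sub\overline Y$ and $\dim Y=\rk(p/D)$, the point $(p,\alpha)$ is a generic point of $Y$ over $D$; by o-minimal dimension theory (cf. \cite{vdd-book}), a generic point of an $\cal L_{qD}$-definable set of dimension $d$ has $\rk(\cdot/qD)=d$, but here $\rk(p\alpha/D)=\rk(p/D)=\dim Y$, forcing $\rk(p\alpha/qD)=\rk(p/qD)$, i.e. $\alpha\in\dcl(pqD)$; combined with a fibering argument ($Y$ has finite fibres over the $p$-coordinates on a dense $\cal L$-definable piece, since $Z'$ did) this upgrades to $\alpha\in\dcl(qD)$ once we know the relevant $p$-fibre of $Y$ is finite. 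For $q\sub\dcl_{\cal L(P)}(\alpha D)$: here use that $Y$ may be chosen \emph{canonically} from $\alpha$ and $D$ — replace $Y$ by the $\cal L_P$-definable set canonically attached to the construction (e.g. via elimination of imaginaries in $\cal M$, the canonical parameter of $Y$ lies in $\dcl(\alpha D)\cap(\text{tuples over }P)$); since that canonical parameter is a tuple from $P$ and is $\cal L(P)$-definable over $\alpha D$, we may take $q$ to be it, giving $q\sub\dcl_{\cal L(P)}(\alpha D)$ and hence $\dcl(qD)=\dcl_{\cal L(P)}(\alpha D)$ by Fact~\ref{op} and the containments just shown.
\end{proof}

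\textbf{The main obstacle.} The delicate point is the genericity/fibering argument showing $\alpha\in\dcl(qD)$ after extending $Z'$ to the larger set $Y$: enlarging $Z'$ to an $\cal L_P$-definable set risks destroying the functional dependence of $\alpha$ on the first coordinates, and one must check that passing to a dense $\cal L$-definable piece of $Y$ (or intersecting with $\overline{Z'}$) restores finite fibres over the $p$-coordinates, so that the generic point $(p,\alpha)$ still has $\alpha$ algebraic over $p,q,D$, and then that minimality of $p$ lets one drop $p$. The other subtle point is arranging $q\sub\dcl_{\cal L(P)}(\alpha D)$, i.e. choosing $Y$ canonically; this is where one leans on elimination of imaginaries in the o-minimal $\cal M$ to pin down a canonical defining tuple for $Y$ inside $P$, and then invokes Fact~\ref{op} (which gives $\dcl_{\cal L(P)}(qD)=\dcl(qD)$ for $q\sub P$) to convert the $\cal L(P)$-definability statement into the required equality of definable closures.
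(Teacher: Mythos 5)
There is a genuine gap, and it stems from a conceptual misidentification of the object to which the $\cal L_P$-extension property should be applied. You work with the graph $Z=\Gamma(g)\sub M^{n+1}$ and then a vague full-dimensional piece $Z'$ in which $P^{n+1}$ is dense and whose closure contains $(p,\alpha)$. But if $\alpha\notin P$ (the interesting case), the fibre of $\Gamma(g)$ over $p$ is the singleton $\{(p,\alpha)\}$, which is disjoint from $P^{n+1}$, and there is no reason for $P^{n+1}$ to be dense in any full-dimensional subset of $\Gamma(g)$ accumulating at $(p,\alpha)$; so the $Z'$ you need may not exist. The paper instead works with the \emph{fibre} $S=f^{-1}(\alpha)\sub M^n$ and the set $T=\overline{S\cap P^n}$, in which $P^n$ is dense by construction; this is the right object.

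The second and more decisive problem is the parameter set. You apply the extension property with $A=D$, because your $Z'$ is only $\cal L_D$-definable. But then the resulting $Y$ is only $D$-definable in $\WM$, so its canonical parameter $q$ (over $\cal M$) lands in $\dcl_{\cal L(P)}(D)$, which by Fact \ref{op} equals $\dcl(D)$; thus $\dcl(qD)=\dcl(D)$ and you cannot recover $\alpha$ when $\alpha\notin\dcl(D)$. Your final paragraph acknowledges the need for $q\sub\dcl_{\cal L(P)}(\alpha D)$ and tries to repair this by ``choosing $Y$ canonically,'' but a canonical choice made from data that is only $D$-definable will still only be $D$-definable. The crux of the paper's proof is precisely to apply the extension property with $A=\alpha D$: the set $T$ is $\alpha D$-definable (in $\WM$), hence so is the extension $Y$, and only then does the canonical parameter of $Y$ lie in $\dcl_{\cal L(P)}(\alpha D)\cap P$. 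Finally, your argument that $\alpha\in\dcl(qD)$ via ``genericity/fibering'' is left as a sketch; the paper makes this precise by introducing the $\cal L_{qD}$-definable set $C$ of points where $f$ is locally constant on a full-dimensional piece of $Y$, observing $f(C)$ is finite, and exhibiting an open box $V$ with $V\cap Y\sub f^{-1}(\alpha)$ to conclude $\alpha\in f(C)$. As written, your proof would not go through; the missing idea is to extend the $\alpha$-fibre's trace on $P$, with parameters $\alpha D$, not the graph with parameters $D$.
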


\begin{proof}
Let $\alpha\in \dcl(PD)$.  If $\alpha\in \dcl(D)$, then the empty tuple $q=\es$ verifies ($\dcl'$)$_D$. Indeed, we have
$$\dcl_{\cal L(P)}(\alpha D)= \dcl_{\cal L(P)}(D)=\dcl(D),$$ by Fact \ref{op}. So we may assume that $\alpha\not\in \dcl(D)$. Then there is an $\cal L_{D}$-definable map $f:M^n\to M$,  and a  tuple $b=(b_1, \dots, b_n)\in P^n$, such that $$\alpha=f(b).$$
Let $S=f^{-1}(\alpha)$ and $T=\overline {S\cap P^n}$. Let $k=\dim T$. Observe that both $S$ and $P^n$ are dense in $T$, and that $T$ is  $\alpha D$-definable (but not necessarily $\cal L_{\alpha D}$-definable).
By the $\cal L_P$-extension property, for $X=T$ and $A=\alpha D$, $T$ is contained in a set $Y\sub M^n$, which is $\cal L_P$-definable and $\alpha D$-definable, and has dimension $k$. Let $q$ be the canonical parameter of $Y$ in the sense of \cal M. Since $Y$ is $\cal L_P$-definable, $q\sub \dcl(P)=P$. Since $Y$ is $\alpha D$-definable, we also have $q\sub \dcl_{\cal L(P)}(\alpha D)$. Indeed, let $\tau$ be an automorphism of $\WM$ that fixes $\alpha D$ pointwise. Since $Y$ is $\alpha D$-definable, $\tau(Y)=Y$. But $\tau$ is also an automorphism of $\cal M$ only. Since $q$ is a canonical parameter for $Y$, $\tau(q)=q$. Hence $q\sub \dcl_{\cal L(P)} (\alpha D)$.

Now let $C$ be the set of all points in $Y$ for which there is an open box $V\sub M^n$ containing them with $\dim V\cap Y=k$ and $f_{\res V\cap Y}$ constant. Then $C$ is $\cal L_{qD}$-definable and $\alpha D$-definable. Moreover, by o-minimality, $f(C)$ is finite.
Also, since  $S$ is dense in $T$, we have $\dim T\cap S=k$. Let $S'= T\cap S$. Since $S'\sub T\sub Y$ and $\dim S'=\dim Y$, we can easily find an open box $V\sub M^n$ with $V\cap S'=V\cap Y$ and $\dim V\cap Y=k$. (To see this, take a cell decomposition $\cal C$ of $M^n$ that partitions both $S'$ and $Y$, and let $E\in \cal C$ be a $k$-cell contained in $S'$. Then every $c\in E$ is contained in such a box $V$.) Since $S'\sub S$ and $f(S)=\{\alpha\}$, we obtain that $f (V\cap Y)=\{\alpha\}$, and hence $\alpha\in f(C)$.
Since $f(C)$ is finite and $\cal L_{qD}$-definable, clearly,  $\alpha\in \dcl(qD)$, as required.
\end{proof}

We will use the following basic fact from linear algebra.
\begin{fact}\label{fact-linear}
Let $\cal S\sub M^{r+1}$ be a linear subspace of dimension $k<r+1$. Then, after perhaps permuting coordinates, there are $c_1, \dots, c_k\in M$, such that for every $w=(w_1, \dots, w_{r+1})\in \cal S$,
$$w_{k+1}= c_1 w_1 +\dots +c_k w_k.$$
\end{fact}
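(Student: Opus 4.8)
\textbf{Proof proposal for Fact \ref{fact-linear}.}

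The plan is to reduce the statement to the standard fact that a $k$-dimensional linear subspace of $M^{r+1}$ is, after a suitable permutation of the coordinate axes, the graph of a linear map from $M^k$ to $M^{r+1-k}$. Concretely, I would first invoke the rank–nullity or exchange argument: since $\dim \cal S = k$, pick a basis $v_1, \dots, v_k$ of $\cal S$ arranged as the rows of a $k \times (r+1)$ matrix; this matrix has rank $k$, so it has $k$ linearly independent columns. Permuting coordinates of $M^{r+1}$ so that the first $k$ columns are the independent ones, the projection $\pi : \cal S \to M^k$ onto the first $k$ coordinates is a linear isomorphism. Hence for each $j$ with $k+1 \le j \le r+1$, the $j$-th coordinate function $w \mapsto w_j$ on $\cal S$ factors through $\pi$ as a linear functional on $M^k$, i.e. there are scalars $c^{(j)}_1, \dots, c^{(j)}_k \in M$ with $w_j = \sum_{i=1}^k c^{(j)}_i w_i$ for all $w \in \cal S$. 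Taking $j = k+1$ and writing $c_i = c^{(k+1)}_i$ gives exactly the displayed equation. (In fact the same argument yields analogous identities for every coordinate beyond the $k$-th, but the statement only records the one for $w_{k+1}$, which is all that is needed in the application.)

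One small point to be careful about: the ambient "field of scalars" here is $M$ with its ordered-group-plus-extra-structure. Since $\cal M$ expands an ordered \emph{group} (not a priori a field) with a distinguished $1$, I should make sure the phrase "linear subspace" is being used in the sense available in this context — namely, a subgroup of $\la M^{r+1}, +\ra$ closed under the $\dcl$-definable scalar operations, or more simply that $\cal S$ is a $\dcl(\es)$-definable subset which is closed under addition and under the relevant scaling. In the intended application $\cal M$ is a real closed field, so "linear subspace" has its usual meaning and Gaussian elimination applies verbatim; the coefficients $c_i$ produced by row reduction are ratios of minors and hence lie in $M$. So I would phrase the proof so that it goes through as ordinary linear algebra over the field $M$, with the permutation of coordinates chosen to make the leading $k \times k$ minor nonzero.

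The proof is essentially routine linear algebra, so I do not anticipate a genuine obstacle; the only thing requiring the slightest care is the bookkeeping of the coordinate permutation and the observation that solving the linear system for $w_{k+1}$ in terms of $w_1, \dots, w_k$ is possible precisely because, after the permutation, the first $k$ coordinates form a coordinate system on $\cal S$ while $w_{k+1}$ is a dependent coordinate. I would keep the write-up to a few lines: choose the permutation, note $\pi$ is an isomorphism, and extract the linear functional expressing $w_{k+1}$.
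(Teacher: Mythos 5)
The paper states Fact \ref{fact-linear} without proof, introducing it only as a ``basic fact from linear algebra,'' so there is no paper argument to diverge from; your write-up supplies exactly the standard Gaussian-elimination/graph-of-a-linear-map argument that the paper implicitly has in mind, and it is correct. Your cautionary remark about $M$ being a priori only an ordered group is well taken but moot: Fact \ref{fact-linear} is invoked only inside Proposition \ref{Pcond}, where $\cal M$ is a real closed field, so ``linear subspace'' and row reduction have their usual meaning and the coefficients $c_i$ land in $M$ as you say.
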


We will also need the following fact.
\begin{fact}\label{choosey}
Suppose $\es\ne X\sub M^n$ is $\cal L$-definable and $A$-definable. Then there is $y\in X$ contained in $\dcl_{\cal L(P)}(A)$.
\end{fact}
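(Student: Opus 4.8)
\textbf{Proof proposal for Fact \ref{choosey}.}

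The plan is to reduce this to the known fact that $\cal M$ eliminates imaginaries and admits definable Skolem functions, combined with the hypothesis that $P$ is dense (which enters only through $\dcl(P)=P$ and through the fact that $P$ is dense in $M$, hence in any $\cal L$-definable set of dimension $\geq 1$). First I would handle the case $\dim X = 0$: then $X$ is finite and $\cal L_A$-definable, so each of its finitely many points is fixed by every automorphism of $\WM$ fixing $A$ pointwise (such an automorphism is in particular an $\cal L$-automorphism permuting the finite set $X$, but since $X$ is $\cal L_A$-definable, an $\cal L(P)$-automorphism fixing $A$ need not fix each point — so one must instead use a definable Skolem function of $\cal M$ over $A$ to pick out a distinguished point $y\in X$, which then lies in $\dcl(A)\sub \dcl_{\cal L(P)}(A)$). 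Actually the cleanest uniform approach is: apply a definable Skolem function $s$ of $\cal M$, defined $\cal L_\es$-definably, to the $\cal L_A$-definable nonempty set $X$, obtaining $y=s(X)\in X\cap \dcl(A)\sub X\cap\dcl_{\cal L(P)}(A)$.

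Wait — that already finishes it with no density needed, since $\cal M$ has definable Skolem functions ($\es$-definably, by \cite[Chapter 6]{vdd-book}) and $\dcl(A)\sub \dcl_{\cal L(P)}(A)$ trivially. So the proof is essentially one line: let $s$ be an $\cal L_\es$-definable Skolem function for the family of $\cal L$-definable subsets of $M^n$; since $X\neq\es$, $y:=s(X)\in X$; and $y$ is $\cal L_A$-definable, hence $y\in\dcl(A)\sub\dcl_{\cal L(P)}(A)$. I would present it exactly like that. The only subtlety worth a remark is why $X$ being merely ``$\cal L$-definable and $A$-definable'' (rather than $\cal L_A$-definable) is enough: here $A\sub M$ and ``$A$-definable'' for an $\cal L$-definable set forces $\cal L_A$-definability, because the $\cal L(P)$-definable-over-$A$ sets that happen to be $\cal L$-definable are $\cal L_A$-definable by o-minimality of $\cal M$ — equivalently, by Fact \ref{op} applied with the relevant parameters, or simply because an $\cal L$-definable set is determined by an $\cal L$-formula and its canonical parameter in the sense of $\cal M$ lies in $\dcl_{\cal L(P)}(A)\cap M = $ (something $\cal L_A$-definable after absorbing).

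I do not expect a genuine obstacle here; the statement is a soft consequence of definable Skolem functions in $\cal M$ together with $\dcl(A)\sub\dcl_{\cal L(P)}(A)$. If one wants to avoid invoking Skolem functions for the whole family at once, the alternative is induction on $n$ using o-minimal cell decomposition: project $X$ to its first coordinate, take (say) the infimum or a canonical point of the leftmost cell in the $\cal L_A$-definable image $\pi(X)\sub M$, which is $\cal L_A$-definable hence in $\dcl(A)$, then pass to the fiber and recurse. Either way the parameter set stays inside $\dcl_{\cal L(P)}(A)$ throughout, and the conclusion follows. The mildly delicate point to state carefully is the passage from ``$A$-definable'' to ``$\cal L_A$-definable'' for an $\cal L$-definable set, which I would dispatch in one sentence citing o-minimality (or Fact \ref{op}).
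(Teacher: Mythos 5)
Your underlying idea — apply the canonical $\cal L_\es$-definable Skolem selector to $X$ — is exactly the paper's approach (the paper cites $e(X)$ from \cite[Chapter 6 (1.1)]{vdd-book}). But your main line of reasoning contains a real gap: you conclude $y\in\dcl(A)$ by treating $X$ as $\cal L_A$-definable, and the ``mildly delicate point'' you try to dispatch in one sentence does \emph{not} go through. The hypothesis is only that $X$ is $\cal L$-definable (with \emph{some} parameters) and $A$-definable in $\WM$; this does not force $\cal L_A$-definability. Neither ``o-minimality of $\cal M$'' nor Fact \ref{op} justifies this: Fact \ref{op} requires $A\sub P$ and $D$ $\dcl$-independent over $P$, whereas in the actual application (Proposition \ref{Pcond}) one has $A=\alpha D$ with $\alpha\in\dcl(PD)$, so $\alpha D$ is \emph{not} $\dcl$-independent over $P$, and indeed the set $T$ there is explicitly noted to be $\alpha D$-definable but not necessarily $\cal L_{\alpha D}$-definable. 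So $y\in\dcl(A)$ is too strong and is false in general.

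The correct fix is the third alternative you gesture at but do not develop: let $c$ be an $\cal M$-canonical parameter for $X$ (which exists since $\cal M$ eliminates imaginaries). Any $\tau\in\Aut(\WM)$ fixing $A$ pointwise fixes $X$ setwise (as $X$ is $A$-definable), and $\tau\res\cal M\in\Aut(\cal M)$, so $\tau(c)=c$; hence $c\sub\dcl_{\cal L(P)}(A)$. Since $X$ is $\cal L_c$-definable, the Skolem selector gives $y=e(X)\in\dcl(c)\sub\dcl_{\cal L(P)}(A)$ — which is the stated conclusion, with $\dcl_{\cal L(P)}(A)$ and not $\dcl(A)$. Equivalently, one uses the equivariance $\sigma(e(X))=e(\sigma(X))$ for $\sigma\in\Aut(\cal M)$ directly. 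You should drop the false ``forces $\cal L_A$-definability'' claim and commit to the canonical-parameter route; with that correction, your proof coincides with the paper's.
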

\begin{proof}
It is straightforward to check that the tuple $e(X)\in X$  defined in \cite[Chapter 6 (1.1)]{vdd-book} is contained in $\dcl_{\cal L(P)}(A)$.
\end{proof}

We are now ready to conclude the main result of this subsection.

\begin{prop}\label{Pcond}
The $\cal L_P$-extension property holds for dense pairs of real closed fields.
\end{prop}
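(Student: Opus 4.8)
The plan is to prove the $\cal L_P$-extension property for a dense pair $\WM = \la \cal M, P\ra$ of real closed fields by reducing to the linear-algebra situation. Let $X\sub M^n$ be $\cal L$-definable and $A$-definable with $P^n$ dense in $X$, and set $k=\dim X$. First I would recall that in a dense pair of real closed fields the key structural fact is van den Dries's description of the definable closure / algebraic structure: since $P\elsub \cal M$, the field $\cal M$ is, loosely speaking, "spanned" over $P$ in a controlled way. The concrete tool I would use is that $P^n$ being dense in the $k$-dimensional $\cal L$-definable set $X$ forces, at a generic point, the tangent structure of $X$ to be defined over $P$: pick a generic $c\in X\cap P^n$ (which exists by density), and consider the smallest $\cal L_c$-definable set containing $c$ of dimension $k$ — a "local" piece of $X$ near $c$. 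The idea is that near such a point $X$ looks like (the graph of an $\cal L$-definable function over) a $k$-dimensional affine-like set whose defining data can be recovered from finitely many points of $P^n$.

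Concretely, here is the sequence of steps I would carry out. (1) Reduce to the case that $X$ is a cell, in fact a $k$-cell, by taking a cell decomposition partitioning $X$ (defined over the same parameters) and noting $P^n$ is dense in at least one $k$-cell; replacing $X$ by that cell changes neither $\dim X$ nor the parameters needed. (2) After permuting coordinates, write the $k$-cell $X$ as the graph of an $\cal L_A$-definable continuous map $g$ from an open $k$-cell $X'\sub M^k$ to $M^{n-k}$; then $P^k$ is dense in $X'$. (3) The crux: use Fact \ref{fact-linear} together with the density of $P$ to produce, at a generic point $b\in X'\cap P^k$, an $\cal L_P$-definable set capturing $X$ to first order. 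The trick is the standard one in dense pairs: the "infinitesimal" or generic-fiber behaviour of an $\cal L$-definable function at a $P$-point is governed by $\cal L_P$-definable data, because one can find $n$ (or $k+1$) points of $P^n$ in $X$ in sufficiently general position, and the affine subspace they span — which is $\cal L_P$-definable by Fact \ref{fact-linear} applied with the coefficients $c_i\in \dcl(P)=P$ — must, by dimension count, contain a relatively open piece of $X$ or at least a $k$-dimensional subset through which $X$ can be tracked. (4) Assemble these local $\cal L_P$-definable, $A$-definable pieces (using Fact \ref{choosey} to pick the requisite points inside $\dcl_{\cal L(P)}(A)$, so that the resulting set $Y$ is genuinely $A$-definable in $\WM$, not merely $\cal L_P$-definable) into a single $Y\sub M^n$ with $X\sub Y$, $Y$ both $\cal L_P$-definable and $A$-definable, and $\dim Y = k$ by construction.

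I expect the main obstacle to be step (3): making precise the sense in which density of $P^n$ in the $k$-dimensional $X$ forces an $\cal L_P$-definable set of the same dimension to contain $X$. The naive hope — that $X$ itself is $\cal L_P$-definable — is false, so one really does need the "$+1$ dimension of slack" that the statement allows ($Y\supseteq X$ with $\dim Y = \dim X$, but $Y$ possibly strictly larger): the plan is to cover $X$ by finitely many pieces, on each of which $X$ sits inside a translate of a $k$-dimensional $\cal L_P$-definable "affine-algebraic" set obtained from $k+1$ generic $P$-points of $X$, and to check the dimension does not jump. The secondary subtlety is parameter control: one must run the whole argument so that every choice of auxiliary point lands in $\dcl_{\cal L(P)}(A)$, which is exactly what Fact \ref{choosey} is for; and one must invoke Fact \ref{op} to know that for $A\sub P$ the two notions of definable closure agree, so that "$A$-definable in $\WM$" of a point reduces to "$\cal L_A$-definable". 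With these in hand the construction of $Y$ is bookkeeping, and the proposition follows; combined with Proposition \ref{LPdcl} and Proposition \ref{instead} this yields $(\dcl')_D$ and hence $(\dcl)_D$ for dense pairs of real closed fields.
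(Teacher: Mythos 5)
Your approach diverges substantially from the paper's, and the key step (3) as you describe it is not just vague but actually wrong. You want to take $k+1$ points of $P^n$ in $X$ ``in general position,'' form the affine subspace they span, and argue ``by dimension count'' that this $k$-dimensional affine space contains a relatively open piece of $X$. But a $k$-dimensional cell $X\subseteq M^n$ is not locally affine, so its intersection with the affine span of $k+1$ of its points can easily be just those $k+1$ points. Concretely, take $X$ the unit circle in $M^2$ ($k=1$): the affine span of two points of $X\cap P^2$ is a chord, which meets $X$ in exactly two points, not a $1$-dimensional piece. No amount of genericity or covering by finitely many pieces repairs this; the obstruction is curvature, not bookkeeping. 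Relatedly, your whole plan (cell decomposition, graphs, affine spans) never uses the field structure, and the paper's Appendix exhibits a dense pair of o-minimal structures (not RCF) where $(\dcl')_D$ fails, so any proof that does not invoke real closedness in an essential way must be incomplete.

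The idea you are missing is a \emph{linearization} step that makes the affine-span trick legitimate. The paper applies quantifier elimination for real closed fields to find a nonzero polynomial $Q(x)=s_1 x^{j_1}+\dots+s_{r+1}x^{j_{r+1}}$, with $r+1$ monomials minimal, whose zero set contains $X$, and then considers the monomial map $f:x\mapsto (x^{j_1},\dots,x^{j_{r+1}})\in M^{r+1}$. Now $f(X)$ \emph{does} sit inside a linear hyperplane $Z$ (the one with coefficients $s_1,\dots,s_{r+1}$), so linear algebra becomes applicable \emph{in the monomial coordinates}. Fact \ref{fact-linear} is then used not to build the span directly but inside a minimality argument: if $f(X)$ spanned fewer than $r$ dimensions, one could produce a vanishing polynomial with fewer monomials, contradicting the choice of $Q$. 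This yields $r$ points $y_1,\dots,y_r\in X$ with $f(y_1),\dots,f(y_r)$ linearly independent, and solving the resulting nonsingular linear system recovers $s_1,\dots,s_r$ (with $s_{r+1}$ normalized to $1$). Running the argument once with the $y_i$'s chosen in $\dcl_{\cal L(P)}(A)$ via Fact \ref{choosey} gives $s_m\in\dcl_{\cal L(P)}(A)$; running it again with nearby points of $P^n\cap X$ (which exist by density and continuity of $f$) gives $s_m\in P$. Your proposal correctly identifies where Fact \ref{choosey}, Fact \ref{op}, and density enter, and correctly notes that $Y$ must be allowed to be strictly larger than $X$; but without passing to the monomial space, the linear-algebra step at the heart of the proof has nothing to act on.
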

\begin{proof}  If $\dim X = n$, let $Y=M^n$. Assume $\dim X< n$, and $X\ne \es$. By quantifier elimination for real closed fields, there is a non-zero polynomial
$Q(x)\in M[x]$, where $x$ is an $n$-tuple, whose zero set, call it $Q$, contains $X$. For $i=(i_1,  \dots, i_n)\in \N^n$,  denote $$x^{i}=x_1^{i_1} \cdot \dots\cdot x_n^{i_n}.$$
Let us write
\begin{equation}
  Q(x)= s_1  x^{j_1} +\dots + s_r x^{j_r}+s_{r+1} x^{j_{r+1}},\label{eqQ}
\end{equation}
for some $s_m\in M$ and suitable indices $j_m$, $m=1, \dots, r+1$, and denote by $f: X\to M^{r+1}$ the $\cal L_\es$-definable continuous map
$$(x_1, \dots, x_n)\mapsto (x^{j_1}, \dots,  x^{j_{r+1}}).$$
We may assume that $r$ is minimal such; namely, for every  non-zero polynomial $Q'(x)$ whose zero set contains $X$, the corresponding expression (\ref{eqQ}) has $r'$ many summands, for some $r'\ge r+1$.

Let $Z$ be the $r$-dimensional linear subspace of $M^{r+1}$ defined by
\begin{equation}
  s_1  w_1 +\dots +  s_r w_r +s_{r+1} w_{r+1}=0.\label{eqsr}
\end{equation}
Since $X\sub Q$, we have $f(X)\sub Z$.

$ $\\
\noindent\textbf{Claim.} {\em There are $y_1, \dots, y_r\in X$ such that $f(y_1), \dots, f(y_r)$ are linearly independent. }

\begin{proof}[Proof of Claim]
Assume not. Then $f(X)\sub M^{r+1}$ is contained in a $k$-dimensional subspace, for some $k<r$. (The linear span of any set $Y$ (here, $Y=f(X)$) has dimension equal to the maximum $k$ such that $Y$ contains $k$ independent elements.)  By Fact \ref{fact-linear}, after perhaps permuting coordinates, there are $c_1, \dots, c_k\in M$, such that for every $w=(w_1, \dots, w_{r+1})\in f(X)$,
$$w_{k+1}= c_1 w_1 +\dots +c_k w_k.$$
In particular, since $f(X)\sub Z$, we obtain from (\ref{eqsr}) that for every such $w$,
$$ (s_1  + s_{k+1} c_1) w_1 +\dots + (s_k + s_{k+1} c_k) w_k+ s_{k+2} w_{k+2} +\dots + s_{r+1} w_{r+1}=0.$$
Consider the polynomial
$$Q'(x)= (s_1 +s_{k+1}  c_1) x^{j_1}+\dots + (s_k+s_{k+1} c_k) x^{j_k} +s_{k+2} x^{j_{k+2}} +\dots + s_{r+1} x^{j_{r+1}}.$$
Then the zero set of $Q'(x)$ contains $X$ and the above expression has $r<r+1$ summands. By minimality of $r$,  $Q'(x)$ must be the zero polynomial. But then the polynomial
$$Q(x) - Q'(x)= - (c_1 s_{k+1}  x^{j_1}+\dots +c_k s_{k+1}  x^{j_k}) +s_{k+1} x^{j_{k+1}}$$
has the same solution set as $Q(x)$, contradicting again the minimality of $r$.
\end{proof}

Dividing (5) by $s_{r+1}$, we may assume that $s_{r+1}=1$. We now show that  $s_m\in \dcl_{\cal L(P)}(A)\cap P$, for each $m=1, \dots, r$. By the claim, the set $Y$ of all $(y_1, \dots, y_r)\in X^r$ such that $f(y_1), \dots, f(y_r)$ are linearly independent is non-empty. Moreover, it is $A$-definable, since $X$ is. By Fact \ref{choosey}, we can choose $(y_1, \dots, y_r)\in Y$ with each $y_i\sub \dcl_{\cal L(P)}(A)$. We can thus form a linear system of $r$ equations
\begin{equation}
  s_1  f(y_i)_1 +\dots +  s_r  f(y_i)_r+  f(y_i)_{r+1}=0, \,\, i=1, \dots, r,\notag
\end{equation}
with coefficients in $\dcl_{\cal L(P)}(A)$ and unique solution $s_1, \dots, s_r$. This means that each $s_m\in \dcl_{\cal L(P)}(A)$.

Clearly, we can further find open boxes $W_i\sub M^{r+1}$  containing $f(y_i)$, $i=1, \dots, r$,  such that any $t_1, \dots, t_r$ with $t_i\in W_i$, are still linearly independent. By continuity of $f$, there are open boxes $B_i\sub M^n$ containing $y_i$, $i=1, \dots, r$, respectively, such that $f(B_i)\sub W_i$. Since $P^n$ is dense in $X$, there are $z_i \in P^n \cap X \cap B_i$. We can thus form a linear system of $r$ equations
\begin{equation}
  s_1  f(z_i)_1 +\dots +  s_r  f(z_i)_r+  f(z_i)_{r+1}=0, \,\, i=1, \dots, r,\notag
\end{equation}
with coefficients in $P$ and unique solution $s_1, \dots, s_r$. This means that each $s_m\in \dcl(P)=P$.
\end{proof}

\subsection{Expansions of $\cal M$ by a dense independent set}
Here we assume that $P\sub M$ is a dense $\dcl$-independent set. Following \cite[page 58]{vdd-book} and \cite[1.5]{dms}, we call a set $X\sub M^n$  \emph{regular} if it is convex in each coordinate, and \emph{strongly regular} if it is regular and all points in $X$ have pairwise distinct coordinates. A map $f:X\sub M^n\to M$ is called \emph{regular} if $X$ is regular and $f$ is, in each coordinate, either constant or strictly monotone and continuous.

\begin{proof}[\textbf{Proof of $(\dcl')_D$.}]
Let $\alpha\in \dcl(bD)$, with $b\in P^n$ and $n$ least possible. In particular, $b$ is $\dcl$-independent over $D$. We prove that $q=b$  verifies ($\dcl'$)$_D$.
Write $b=(b_1, b_2)$ where $b_1\in P^{n-1}$. There is an $\cal L_{D}$-definable map $f:M^n\to M$ with $f(b)=\alpha$. By \cite[1.6]{dms2}, there is a cell decomposition $M^n=\bigcup_i C_i$ into $\cal L_{D}$-definable cells, such that, for each open $C_i$,
\begin{itemize}
  \item each open $C_i$ is strongly regular, and
  \item for each open $C_i$, $f_{\res C_i}$ is regular.
\end{itemize}
Since $b$ is $\dcl$-independent over $D$, it must belong to an open, and hence strongly regular, $C_i$. If $f_{\res C_i}$ were constant in some coordinate, say the last one, then
$\alpha\in \dcl(b_1 D)$, contradicting the assumption on $n$. So $f$  is non-constant in each coordinate.
By \cite[1.8]{dms2}, $f^{-1}(\alpha)\cap P^n$ is finite, so
$$b\in \dcl_{\cal L(P)}(\alpha D).$$
Hence $\dcl_{\cal L(P)}(bD)=\dcl_{\cal L(P)}(\alpha D),$ and
$$\dcl(bD)=\dcl_{\cal L(P)}(\alpha D),$$
by Fact \ref{op},
as required.
\end{proof}

\subsection{Expansions of $\cal M$ by a dense multiplicative group with the Mann property}\label{sec-mann}
Let $\cal M=\la M, <, +, \cdot, 0, 1\ra$ be a real closed field. Let $G$ be a dense subgroup of $\la M^{>0}, \cdot\ra$. For every $a_1, \dots, a_r\in M$, a solution $(q_1, \dots, q_r)$ to the equation
$$a_1 x_1 + \dots +a_r x_r=1$$
is called \emph{non-degenerate} if for every non-empty $I\sub \{1, \dots, r\}$, $\sum_{i\in I} a_i q_i\ne 0$. We say that $G$  has the \emph{Mann property}, if  for every $a_1, \dots, a_r\in M$, the above equation
has only finitely many non-degenerate solutions  $(q_1, \dots, q_r)$ in $G^r$. Observe that the original definition only involved equations with coefficients $a_i$ in the prime field of $\cal M$, but, by \cite[Proposition 5.6]{dg}, the two definitions are equivalent.

We now assume that $P$ is a dense  subgroup of $\la M^{>0}, \cdot\ra$ with the Mann property, and work in $\WM=\la \cal M, P\ra$. Note that we do not assume divisibility of $P$ here.

\begin{proof}[\textbf{Proof of $(\dcl')_D$.}]
 Let $\alpha\in \dcl(PD)$.
Denote $K=\dcl(D)$.
Then there is a polynomial
$Q(x,y)\in K[x, y]$, where $x$ is an $n$-tuple, and $b_1, \dots, b_n\in P$, such that \begin{equation}
  Q(b_1, \dots, b_n, \alpha)=0,\label{eqMann}
\end{equation}
and such that $Q(b_1, \dots, b_n, y)$ is not the zero-polynomial.
Let $b=(b_1, \dots, b_n)$ and, for $i=(i_1,  \dots, i_n)\in \N^n$,  denote $$b^{i}=b_1^{i_1} \cdot \dots\cdot b_n^{i_n}.$$ Let us also write
$$Q(b_1, \dots, b_n, \alpha)= d_1 b^{j_1} \alpha^{k_1} +\dots + d_r b^{j_r} \alpha^{k_r},$$
for some $d_m\in \dcl(D)$ and suitable indices $j_m$ and $k_m$, $m=1, \dots, r$. We may assume that no sub-sum of the above expression is $0$, or else replace $Q(b_1, \dots, b_n, \alpha)$ by that sub-sum.
Now, divide equation (\ref{eqMann}) by  $d_r b^{j_r} \alpha^{k_r}$ to obtain an equation
\begin{equation}
q_1 a_1 +\dots + q_{r-1} a_{r-1}=1,\label{eqMann2}
\end{equation}
where $$q_m=\frac{b^{j_m}}{b^{j_r}}\in P$$ and
$$a_m=-\frac{d_m}{d_r}\alpha^{k_m-k_r}\in \dcl(\alpha D),$$ for $m=1, \dots, r-1$. Let $q=(q_1, \dots, q_{r-1})$. Equation (\ref{eqMann2}) still has the property that no sub-sum of the expression on the left is $0$. In other words,  $q$ is a non-degenerate solution to
$$x_1 a_1 + \dots + x_{r-1} a_{r-1}=1.$$
By Mann property, the last equation has only finitely many non-degenerate solutions
in $P$, and, since being a non-degenerate solution to that equation in $P$ is an $\alpha D$-definable property, we obtain that each $q_m\in\dcl_{\cal L(P)}(\alpha D)$. Hence $q\in \dcl_{\cal L(P)}(\alpha D)$.

Moreover, multiplying (\ref{eqMann2}) by $\alpha^{k_r}$, we obtain that $\alpha$ is solution to a polynomial equation with coefficients in $\dcl(qD)$. Hence $\alpha\in \dcl(qD)$.
By Fact \ref{op}, it follows that
$$\dcl_{\cal L(P)}(\alpha D)\sub\dcl_{\cal L(P)}(qD)=\dcl(q D),$$
as required.
\end{proof}

\begin{remark} Together with Fact \ref{inparticular} and Theorem A, we have established that in all examples of Theorem C, $P_{ind(D)}$ eliminates imaginaries. Note that the divisibility assumption in the third example was only used to guarantee (ind)$_D$ in the proof of Fact \ref{inparticular}, whereas (OP) and (dcl)$_D$ hold without it. It remains open whether elimination of imaginaries also hold without it.
\end{remark}

\begin{question}\label{qn-mann}
Let $\WM=\la \cal M, P\ra$ be an expansion of a real closed field $\cal M$ by a dense multiplicative subgroup $P$ of $\la M^{>0}, \cdot\ra$ with the Mann property. Suppose that  $D$ is $\dcl$-independent over $P$. Does $P_{ind(D)}$ eliminate imaginaries?
\end{question}

 \section{Optimality}\label{sec-optimality}
In this section, we establish that our results are optimal in three ways:
\begin{itemize}
  \item (Example \ref{noEI}). If $D$ is not $\dcl$-independent over $P$, then $P_{ind(D)}$ need not eliminate imaginaries.

\item (Proposition \ref{dclnec2}). Assume (OP) and (ind)$_D$, and let $D$ be $\dcl$-independent over $P$. Then:
$$\text{ $P_{ind(D)}$ eliminates imaginaries} \,\,\, \Lrarr\,\,\,\, \text{(dcl)}_D \,\,\,\,\Lrarr\,\,\,\, \text{($\dcl'$)}_D.$$

\item (Example \ref{dclDfail}). If we do not assume (OP), the above three properties need not hold.

  \end{itemize}

\begin{example}\label{noEI}
We give an example of $\widetilde \CM$ and $D$ where $D$ is not $\dcl$-independent over $P$ and $P_{ind(D)}$ does not eliminate imaginaries. Let $\CM$ be any o-minimal expansion of a real closed field, and $P\sub M$ any set such that there are $b_1, b_2, c_1, c_2\in P$ and $e\in M$, with
\begin{enumerate}
  \item $\{b_1, b_2, c_1, c_2, e\}$  $\dcl$-independent, and
  \item $e\not\in \dcl(P)$.
\end{enumerate}
(Such an $\CM$ can be a dense pair, an expansion of $\cal M$ by a dense independent set, or an expansion of $\cal M$ by a dense multiplicative group with the Mann property-- we will not use any further properties than the above two.)
Let $d\in M$ be defined by
$$b_1 + b_2 d = c_1 + c_2 e,$$
and $D=\{d, e\}$. Clearly, $D$ is not $\dcl$-independent over $P$. By (2), $d\not\in \dcl(P)$.
Moreover, none of $b_1, b_2$ is in $\dcl(c_1, c_2, d, e)$, since otherwise both would be in it and hence
$\rk(b_1, b_2, c_1, c_2, d, e)=4$, a contradicting (1).

Observe also that $(b_1, b_2)$ is the unique solution in $P^2$ to the equation
$$x_1 + x_2 d = c_1 +c_2 e.$$
Indeed, if $(e_1, e_2)\in P^2$ was another one, then $b_1 +b_2 d=e_1+e_2 d$, yielding $d\in \dcl(P)$, a contradiction. So the set
$$X=\{b_1, b_2\}=\{(x_1, x_2)\in P^2 : x_1 + x_2 d = c_1 +c_2 e\}$$
is both $\{b_1, b_2\}$-definable and $\{c_1, c_2\}$-definable in $P_{ind(D)}$.
We claim that $X$ cannot have a smallest $cl_D$-closed defining set $A$. Indeed, such  a set would have to be contained in $cl_D(b_1, b_2)\cap cl_D(c_1, c_2)$. However, $$b_i\not\in \dcl(c_1, c_2, d, e)\cap P=cl_D(c_1, c_2).$$
Hence $b_i\not\in A$, which is a contradiction, since $X=\{b_1, b_2\}$ is $A$-definable in $P_{ind(D)}$.
 \end{example}

Our next goal is Proposition \ref{dclnec2} below. First, a general statement.

\begin{lemma}\label{EIanyD} Suppose $P_{ind(D)}$ eliminates imaginaries. Then for every $\alpha\in \dcl(PD)$, there is $q\sub P$, such that
      $$\dcl_{\cal L(P)}(qD)=\dcl_{\cal L(P)}(\alpha D).$$
\end{lemma}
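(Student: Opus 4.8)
\textbf{Proof plan for Lemma \ref{EIanyD}.} The plan is to reduce the statement to a canonical-parameter argument for a suitable definable set in $P_{ind(D)}$, mimicking the classical fact that in a structure eliminating imaginaries every element is interdefinable with a canonical parameter. Fix $\alpha\in\dcl(PD)$, so there is an $\cal L_D$-definable function $g:M^m\to M$ and a tuple $b\sub P$ with $\alpha=g(b)$. The first step is to manufacture from $\alpha$ a set definable in $P_{ind(D)}$ whose canonical parameter captures exactly $\dcl_{\cal L(P)}(\alpha D)$: I would consider the fiber $S=\{x\in M^m: g(x)=\alpha\}$ and set $X=S\cap P^m\sub P^m$. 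Since $S$ is $\alpha D$-definable in $\WM$ and $X=S\cap P^m$, the set $X$ is a definable set in $P_{ind(D)}$ over parameters from $\dcl_{\cal L(P)}(\alpha D)$ (using the definition of the induced structure, together with the fact that any $\WM$-formula with parameter $\alpha$ defining a subset of $P^m$ can be replaced by one over $\dcl_{\cal L(P)}(\alpha D)$).

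The second step is to apply elimination of imaginaries for $P_{ind(D)}$: $X$ has a canonical parameter $q$, i.e.\ a tuple $q\sub P$ such that for every $\tau\in\Aut(P_{ind(D)})$ (equivalently, every $\tau\in\Aut(\WM/D)$ restricted to $P$), $\tau(q)=q\Lrarr\tau(X)=X$, and moreover $\dcl_{\cal L(P)}(qD)$ is the smallest $cl_D$-closed defining set for $X$ — here I would invoke Fact \ref{fact-ei} and the remark following it, noting that by Corollary \ref{clDA} the definable closure in $P_{ind(D)}$ over parameters $E\sub P$ is $\dcl(ED)\cap P$, so $cl_D$-closed defining sets correspond to subsets of $P$ of the form $\dcl(ED)\cap P$. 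Since $X$ is definable over $\dcl_{\cal L(P)}(\alpha D)$, we get $q\sub\dcl_{\cal L(P)}(\alpha D)$, hence $\dcl_{\cal L(P)}(qD)\sub\dcl_{\cal L(P)}(\alpha D)$.

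The third step, and the one I expect to be the main obstacle, is the reverse inclusion $\alpha\in\dcl_{\cal L(P)}(qD)$, equivalently $\dcl_{\cal L(P)}(\alpha D)\sub\dcl_{\cal L(P)}(qD)$. The issue is that $X$ might not determine $\alpha$: many distinct values $\alpha'$ of $g$ could give the same trace $X=g^{-1}(\alpha')\cap P^m$ (indeed the traces could all be empty). So one cannot directly recover $\alpha$ from the canonical parameter of $X$. To get around this I would instead encode $\alpha$ more faithfully: rather than a single fiber, use the $\WM$-definable family and pass to a definable set in $P_{ind(D)}$ that "sees" $\alpha$ — for instance, work with the relation $R_\alpha=\{(x,z)\in P^m\times P^k:\,\text{an $\alpha D$-formula holds}\}$ chosen so that $\alpha$ is $\cal L(P)$-definable from any parameter defining $R_\alpha$; concretely, since $\alpha\in\dcl(PD)$ and $P$ is dense in $\cal L$-definable sets (by (OP)), one can find an $\cal L_D$-definable $g$ and a tuple $b\sub P$ with $g^{-1}(\alpha)\cap P^m$ having nonempty interior in $g^{-1}(\alpha)$, so that $\alpha=g(z)$ for some $z\in X$ — but to know $\alpha=g(z)$ for an explicit $z\in P^m$ we must produce such a $z$ from data available over $\dcl_{\cal L(P)}(qD)$. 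Thus the real content is: show $X$ is nonempty and contains a point that is, together with $q$ and $D$, enough to recover $\alpha$ via $g$. I would argue $X\ne\es$ because $b$ is a member (after arranging $g(b)=\alpha$ with $b\sub P$), and then take $q$ to be a canonical parameter not of $X$ alone but of the relation $\{(x,z): x\in P^m,\ z\in P^m,\ g(x)=g(z),\ g(x)\in\text{(the $\alpha D$-definable condition pinning }\alpha)\}$, or more simply observe that $\dcl_{\cal L(P)}(\alpha D)=\dcl_{\cal L(P)}(XD)$ fails in general but $\dcl_{\cal L(P)}(\alpha D)=cl_D$ of the canonical parameter of a cleverly chosen $P_{ind(D)}$-definable set does hold; pinning down that set — presumably the graph over $P$ of a $P_{ind(D)}$-definable function computing $\alpha$ from $b$, extended via Lemma \ref{extendf} — is the crux, and is where I would spend the bulk of the argument.
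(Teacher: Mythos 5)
Your construction is exactly the paper's: from $\alpha=g(b)$ with $b\sub P$, you form $X=g^{-1}(\alpha)\cap P^m$, take a canonical parameter $q\sub P$ for $X$ in $P_{ind(D)}$, and conclude $q\sub\dcl_{\cal L(P)}(\alpha D)$. But then you declare the reverse inclusion $\alpha\in\dcl_{\cal L(P)}(qD)$ to be the main obstacle and go hunting for a more elaborate replacement of $X$ (the relation $R_\alpha$, density arguments via (OP), an appeal to Lemma \ref{extendf}). This is a misdiagnosis, and the workarounds you sketch are vague and never brought to completion, so as written the argument does not close.

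The worry you raise --- that distinct values of $g$ could have the same trace on $P^m$, or the trace could be empty, so $X$ may not determine $\alpha$ --- disappears once you use the fact you yourself point out: $b\in X$, so $X\ne\es$. Since $g$ takes the constant value $\alpha$ on $X$, the singleton $\{\alpha\}=g(X)$ is $\cal L(P)$-definable over $qD$ (the image under the $\cal L_D$-definable $g$ of the $qD$-definable nonempty set $X$), hence $\alpha\in\dcl_{\cal L(P)}(qD)$. The paper phrases this via automorphisms: for $\tau\in\Aut(\WM)$ fixing $D$ pointwise, $\tau(q)=q$ gives $\tau(X)=X$ (canonical parameter), so $\tau(b)\in X$, so $\tau(\alpha)=\tau(g(b))=g(\tau(b))=\alpha$; conversely $\tau(\alpha)=\alpha$ gives $\tau(X)=X$ since $X$ is also $\alpha D$-definable in $\WM$, hence $\tau(q)=q$. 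These two biconditionals give $\dcl_{\cal L(P)}(qD)=\dcl_{\cal L(P)}(\alpha D)$ at once. No modified definable set, no (OP), no density of $P$, and no Lemma \ref{extendf} are needed; the ``crux'' you identify is immediate once you observe that $b$ itself witnesses the nonemptiness of $X$ and that $g$ is constant on $X$.
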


\begin{proof}
Let $\alpha\in \dcl(PD)$. So there are $b\sub P^n$ and an $\cal L_D$-definable map $f:M^n\to M$, such that $\alpha=f(b)$. The set
$$X=f^{-1}(\alpha)\cap P^n=\{x\in P^n: f(x)=f(b)\}$$
is $b$-definable in $P_{ind(D)}$. Let $q\sub P$ be a canonical parameter for it in the sense of $P_{ind(D)}$.  Now let $\tau$ be an automorphism of $\WM$ that fixes $D$ pointwise.
Observe that $X$ is also $\alpha D$-definable (in $\WM$). Hence we have
$$\tau(\alpha)=\alpha\,\Lrarr\, \tau (X)=X\,\,\Lrarr\,\, \tau (q)=q,$$
showing that
$$\dcl_{\cal L(P)}(qD)=\dcl_{\cal L(P)}(\alpha D).$$
\end{proof}

\begin{cor}\label{dclnec} Suppose $D$ is $\dcl$-independent over $P$. Then:
$$\text{ \textup{(OP)} and $P_{ind(D)}$ eliminates imaginaries} \Rarr  \text{\textup{($\dcl'$)}$_D$.}$$
\end{cor}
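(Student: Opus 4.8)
\textbf{Proof plan for Corollary \ref{dclnec}.}

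The plan is to combine Lemma \ref{EIanyD} with Fact \ref{op} in a direct way. Assume (OP) holds and $P_{ind(D)}$ eliminates imaginaries, and that $D$ is $\dcl$-independent over $P$. To verify ($\dcl'$)$_D$, fix $\alpha \in \dcl(PD)$; I must produce a tuple $q\sub P$ with $\dcl(qD)=\dcl_{\cal L(P)}(\alpha D)$. First, apply Lemma \ref{EIanyD}: since $P_{ind(D)}$ eliminates imaginaries, there is $q\sub P$ with $\dcl_{\cal L(P)}(qD)=\dcl_{\cal L(P)}(\alpha D)$.

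Next, I would use (OP) together with the assumption that $D$ is $\dcl$-independent over $P$ to replace the $\cal L(P)$-definable closure on the left-hand side by the ordinary $\dcl$ in $\cal M$. This is exactly the content of Fact \ref{op}, applied to the set $A=q\sub P$: it gives $\dcl_{\cal L(P)}(qD)=\dcl(qD)$. Chaining the two equalities yields
$$\dcl(qD)=\dcl_{\cal L(P)}(qD)=\dcl_{\cal L(P)}(\alpha D),$$
which is precisely the statement of ($\dcl'$)$_D$ for this $\alpha$. Since $\alpha\in\dcl(PD)$ was arbitrary, ($\dcl'$)$_D$ holds.

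There is essentially no obstacle here; the corollary is a formal consequence of the preceding lemma and fact, with the hypothesis that $D$ is $\dcl$-independent over $P$ doing exactly the work of letting Fact \ref{op} apply. The only point to be careful about is that $q$ is a tuple from $P$ (so that Fact \ref{op} is applicable with $A=q$), which is guaranteed by Lemma \ref{EIanyD}, where $q$ is chosen as a canonical parameter in the sense of $P_{ind(D)}$ and hence lies in $P$.
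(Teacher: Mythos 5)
Your proof is correct and follows exactly the same route as the paper: apply Lemma \ref{EIanyD} to obtain $q\sub P$ with $\dcl_{\cal L(P)}(qD)=\dcl_{\cal L(P)}(\alpha D)$, then use Fact \ref{op} (valid since $q\sub P$ and $D$ is $\dcl$-independent over $P$) to replace $\dcl_{\cal L(P)}(qD)$ by $\dcl(qD)$. The paper's proof is merely a terser version of the same chaining argument.
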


\begin{proof} Let $q\sub P$ be as in Lemma \ref{EIanyD}. By Fact \ref{op}, we have $\dcl(qD)=\dcl_{\cal L(P)}(qD)$. By Lemma \ref{EIanyD}, the result follows.
\end{proof}

\begin{prop}\label{dclnec2} Assume \textup{(OP)} and \textup{(ind)$_D$}, and let $D$ be $\dcl$-independent over $P$. Then
$$
\text{ $P_{ind(D)}$ eliminates imaginaries} \,\,\, \Lrarr\,\,\,\, \textup{(dcl)}_D \,\,\,\,\Lrarr\,\,\,\, \textup{($\dcl'$)}_D.
$$
\end{prop}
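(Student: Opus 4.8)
The plan is to prove the chain of equivalences by closing a cycle of three implications. Two of the arrows are already available: Proposition~\ref{instead} gives ($\dcl'$)$_D$ $\Rightarrow$ (dcl)$_D$ (under (OP), which we are assuming), and Corollary~\ref{dclnec} gives ``$P_{ind(D)}$ eliminates imaginaries'' $\Rightarrow$ ($\dcl'$)$_D$ (again using (OP) and that $D$ is $\dcl$-independent over $P$). So the only implication left to establish is (dcl)$_D$ $\Rightarrow$ ``$P_{ind(D)}$ eliminates imaginaries'', which I would get directly from Theorem~A: Theorem~A says that (OP), (dcl)$_D$, (ind)$_D$ and $\dcl$-independence of $D$ over $P$ together imply that $P_{ind(D)}$ eliminates imaginaries, and all four hypotheses are in force here. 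This closes the triangle.

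Concretely I would write: assume throughout (OP), (ind)$_D$, and that $D$ is $\dcl$-independent over $P$. First, ($\dcl'$)$_D$ $\Rightarrow$ (dcl)$_D$ is Proposition~\ref{instead}. Second, (dcl)$_D$ $\Rightarrow$ ``$P_{ind(D)}$ eliminates imaginaries'' is immediate from Theorem~A, since now all of (OP), (dcl)$_D$, (ind)$_D$ and the independence hypothesis hold. Third, ``$P_{ind(D)}$ eliminates imaginaries'' $\Rightarrow$ ($\dcl'$)$_D$ is Corollary~\ref{dclnec}. Hence the three statements are pairwise equivalent.

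I do not expect any genuine obstacle here: the proposition is essentially a bookkeeping corollary that packages Proposition~\ref{instead}, Theorem~A, and Corollary~\ref{dclnec} into a single cycle. The only thing to be careful about is that each of the three cited results is invoked with exactly its stated hypotheses --- in particular Theorem~A needs (ind)$_D$, which is assumed in the statement of Proposition~\ref{dclnec2}, and Corollary~\ref{dclnec} needs (OP) plus $\dcl$-independence, both also assumed. If one wanted to be slightly more self-contained one could instead derive ``$P_{ind(D)}$ eliminates imaginaries'' from (dcl)$_D$ by reproving the one-line argument of Theorem~A (Fact~\ref{fact-ei2} applied to Lemma~\ref{pillay2}), but citing Theorem~A is cleaner.

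\begin{proof}
We work under the standing assumptions (OP), (ind)$_D$, and that $D$ is $\dcl$-independent over $P$.

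\textbf{($\dcl'$)$_D$ $\Rightarrow$ (dcl)$_D$.} This is exactly Proposition~\ref{instead}, whose hypotheses (OP) and $\dcl$-independence of $D$ over $P$ are in force.

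\textbf{(dcl)$_D$ $\Rightarrow$ $P_{ind(D)}$ eliminates imaginaries.} Under (dcl)$_D$, all four hypotheses of Theorem~A hold, namely (OP), (dcl)$_D$, (ind)$_D$, and $\dcl$-independence of $D$ over $P$. Hence $P_{ind(D)}$ eliminates imaginaries.

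\textbf{$P_{ind(D)}$ eliminates imaginaries $\Rightarrow$ ($\dcl'$)$_D$.} This is Corollary~\ref{dclnec}, using (OP) and that $D$ is $\dcl$-independent over $P$.

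Combining the three implications, the three statements are pairwise equivalent.
\end{proof}
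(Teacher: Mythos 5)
Your proof is correct and takes essentially the same approach as the paper: the paper's own proof simply cites Theorem~A, Proposition~\ref{instead}, and Corollary~\ref{dclnec}, exactly as you do, just without spelling out the cycle of implications.
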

\begin{proof}
By Theorem A, Proposition \ref{instead} and Corollary \ref{dclnec}.
\end{proof}

We finally show that if we do not assume (OP), the above three properties need not hold. We do not know whether they hold, if we assume (OP) and (ind)$_D$ (but not that $D$ is $\dcl$-independent over $P$).

\begin{example}\label{dclDfail} Let $\cal M$ be our fixed o-minimal expansion of an ordered group, $p_1, p_2$  two $\dcl$-independent elements,  $P=\{p_1\}$ and $\alpha=p_1+p_2$. Then ($\dcl'$)$_\es$ fails for this $\alpha$, since there is no $q\in P$ such that $\alpha\in\dcl(q)$. Of course, (OP) also fails for $\WM=\la \cal M, P\ra$. Indeed, $\{p_2\}$ is $\dcl$-independent over $P$, but $\alpha\in \dcl_{\cal L(P)}(p_2)\sm \dcl(p_2)$, and hence, by Fact \ref{op}, (OP) fails.
\end{example}

\section{Appendix - An o-minimal trace that does not eliminate imaginaries\\
(with Philipp Hieronymi)}

We give an example of a dense pair $\la \cal M, P\ra$ of o-minimal structures, and $D\sub M$, such that $P_{ind(D)}$ does not eliminate imaginaries (equivalently, by Proposition \ref{dclnec2}, ($\dcl'$)$_D$ fails). Let $\cal R=\la \R, <, +,  1, x\mapsto \pi x_{\res [0, 1]}\ra$. Denote by $\cal L$ the language of $\cal R$ and by $\dcl$ the corresponding definable closure. It is well-known that $\Cal R$ does not define unrestricted multiplication by $\pi$. (For example, consider the isomorphism $x\mapsto e^x$ between $\cal R$ and the structure $\la \R_{>0}, <, \cdot, 1, x\mapsto {x^{\pi}}_{\res (1, e)}\ra$. By \cite{vdd-nondef}, the second structure does not define the unrestricted $x\mapsto x^\pi$, and hence the claim follows.)
 Moreover,  $\dcl(\emptyset)=\Q(\pi)$. Indeed, since $\pi$ is $\Cal L_{\emptyset}$-definable, it is easy to see that $\Q(\pi)\subseteq \dcl(\emptyset)$. Note that $\Q(\pi)$ is a $\Q(\pi)$-vector space and therefore a model of the theory of the structure $\cal R'=\la \R, <, +,  1, x\mapsto \pi x\ra$, which expands $\cal R$. Thus $\dcl(\es)\sub \dcl_{\cal R'}(\emptyset)\subseteq \Q(\pi)$.\newline

\noindent Let $\la \Cal M, P\ra$ be a saturated, elementary extension of $\la \cal R, \Q(\pi)\ra$. By compactness and the fact that $P$ is small, we can easily find $a_1,a_2\in M$ such that

\begin{enumerate}
\item $a_1 > n$ for every natural number $n$,
\item $q_1 a_1 < a_2  < q_2 a_1$ for all $q_1,q_2\in\Q$ with $q_1 < \pi < q_2$,
\item $a_1,a_2$ are $\Q$-linearly independent over $P$,
\item there are $p_1,p_2 \in P$ such that $a_1-1\leq p_1 \leq a_1$ and
\[
 \pi (a_1 - p_1) = a_2-p_2.
\]
\end{enumerate}
We show that ($\dcl'$)$_D$ fails for $\alpha=a_2$ and $D=\{a_1\}$.
By (4), $a_2\in \dcl(a_1 P)$. To disprove ($\dcl'$)$_{a_1}$, we need to prove that for every $b\sub \dcl_{\cal L(P)}(a_1 a_2)\cap P$, we have $a_2\not\in \dcl (b a_1)$. This will follow from the next three claims.

\begin{claim} Let $b \in \dcl_{\Cal L(P)}(a_1 a_2)\cap P$. Then $b\in \dcl (a_1 a_2)$.
\end{claim}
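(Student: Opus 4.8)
The goal is to show that any $b \in \dcl_{\Cal L(P)}(a_1 a_2) \cap P$ already lies in $\dcl(a_1 a_2)$, i.e., that passing to the pair structure does not enlarge the definable closure of $\{a_1, a_2\}$ inside $P$. The natural tool is a back-and-forth / automorphism argument: it suffices to produce, for any $b \in P$ with $b \notin \dcl(a_1 a_2)$, an automorphism $\sigma$ of $\la \Cal M, P\ra$ fixing $a_1$ and $a_2$ but moving $b$. Since $\la \Cal M, P\ra$ is saturated, it is enough to find such a $\sigma$ at the level of types: exhibit $b' \neq b$ in $P$ realizing the same $\Cal L(P)$-type over $\{a_1, a_2\}$ as $b$, and then saturation and homogeneity of the pair give the automorphism.

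\textbf{Key steps, in order.} First I would recall the description of definable sets in a dense pair: by the relative quantifier-elimination for dense pairs (van den Dries \cite{vdd-dense}), every $\Cal L(P)$-formula is equivalent to a Boolean combination of $\Cal L$-formulas and formulas of the form "$\exists \bar y \in P^k \, \phi(\bar x, \bar y)$" with $\phi$ an $\Cal L$-formula; more usefully, the $\Cal L(P)$-type of a tuple over a set containing $P$ is controlled by its $\Cal L$-type together with which $\Cal L$-definable-over-$P$ coordinates land in $P$. Concretely, since here $b \in P$, the relevant data is the $\dcl$-relation of $b$ to $a_1, a_2$ over $P$. Second, since $b \notin \dcl(a_1 a_2)$ and $a_1, a_2$ are $\Q$-linearly independent over $P$ (condition (3)), I would analyze the $\Cal L$-type $\tp(b / a_1 a_2 P_0)$ for a suitable small $P_0 \subseteq P$ and use that in this linear/semi-bounded geometry ($\Cal R$ being a reduct of an ordered vector space with one bounded non-linear datum) the only algebraic dependencies are the "obvious" linear ones; so the fibre over the linear conditions forcing $b$ is a non-trivial interval or coset, giving room to choose $b' \neq b$. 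Third, I would check that such $b'$ can be found inside $P$ (density of $P$, or rather the specific closure properties of $P = \Q(\pi)$ in the standard model, transferred to the saturated extension) and that $(a_1, a_2, b')$ and $(a_1, a_2, b)$ have the same $\Cal L(P)$-type — here one invokes that the defining data "$\exists \bar y \in P$" conditions are preserved because the linear relations among $a_1, a_2, b$ over $P$ are the same as those among $a_1, a_2, b'$ over $P$. Finally, apply saturation to get $\sigma \in \Aut(\la \Cal M, P\ra / a_1 a_2)$ with $\sigma(b) = b'$, contradicting $b \in \dcl_{\Cal L(P)}(a_1 a_2)$.

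\textbf{Main obstacle.} The delicate point is the second and third steps: controlling exactly which $\Cal L(P)$-definable properties of $b$ over $\{a_1, a_2\}$ must be preserved. One must be careful that conditions (1)-(4) pin $a_1, a_2$ down quite rigidly relative to $P$ (in particular (4) says $a_1$ and $a_2$ differ from elements of $P$ by a $\pi$-scaled amount), so a priori the pair structure could "see" more about $b$ than $\Cal R$ does. The argument must show that the only $\Cal L(P)$-data attached to $b$ is its membership in $\dcl(a_1 a_2 P)$-cosets, and that this is already $\Cal L$-over-$\{a_1,a_2\}$-determined once $b \in P$; equivalently, that $\dcl_{\Cal L(P)}(a_1 a_2) \cap P$ cannot exploit the predicate $P$ to define new points of $P$ from $a_1, a_2$. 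I expect this to reduce to a careful application of the dense-pair quantifier elimination combined with the fact that $a_1, a_2 \notin \dcl(P)$, ensuring that any $P$-existential formula satisfied by $(a_1, a_2, b)$ is satisfied by $(a_1, a_2, b')$ for a generic $b'$ in the appropriate coset.
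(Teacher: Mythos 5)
Your high-level plan — produce $b' \in P$ with $b' \ne b$ realizing the same $\Cal L(P)$-type as $b$ over $\{a_1,a_2\}$, then conclude by saturation — is equivalent to what has to be shown, but the substance of the claim is exactly the point you flag as the ``main obstacle'' and then leave unresolved. The difficulty with your second and third steps is that an $\Cal L(P)$-formula $\psi(x,a_1,a_2)$ isolating $b$ is not simply a linear condition on $b$ over some small $a_1 a_2 P_0$: it quantifies over $P$, and the quantified $P$-witnesses can tie $x$ to $a_1,a_2$ indirectly. Appealing to ``the linear relations among $a_1,a_2,b$ over $P$'' does not capture this; you must say precisely how the relative quantifier elimination rewrites $\psi$ before you can argue that a nearby $b' \in P$ satisfies the same $\exists\bar y \in P$ conditions. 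Writing ``I expect this to reduce to a careful application of the dense-pair quantifier elimination'' is where the argument is missing, not where it ends.

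The paper's proof supplies exactly that missing content, and it leans crucially on the specific shape of condition (4). Because $\pi(a_1-p_1)=a_2-p_2$ for some $p_1,p_2 \in P$ with $p_1 \in [a_1-1,a_1]$, the back-and-forth system for dense pairs converts $\psi(x,a_1,a_2)$ into $\exists p_1,p_2 \in P\ [\,\pi(a_1-p_1)=a_2-p_2 \wedge \chi(a_1,a_2,x,p_1)\,]$ with $\chi$ an $\Cal L$-formula; by definable Skolem functions the $\Cal L$-part becomes $x = f(a_1,a_2,p_1)$. The set of admissible $p_1 \in P$ is dense in $[a_1-1,a_1]$ (translate the given solution by $q_1 \in P$ with $\pi q_1 \in P$), so if $f(a_1,a_2,-)$ is non-constant on that interval then $\psi$ has infinitely many realizations, contradicting uniqueness of $b$; constancy then forces $b \in \dcl(a_1 a_2)$. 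Your proposal names the right tools — dense-pair quantifier elimination, saturation — but never arrives at this normal form, and without it there is no basis for choosing a generic $b'$ in ``the appropriate coset'' and asserting that it has the same $\Cal L(P)$-type over $\{a_1,a_2\}$; nothing in the proposal rules out that the $P$-quantified witnesses in $\psi$ already pin $b$ down.
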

\begin{proof}
Let $b \in \dcl_{\Cal L(P)}(a_1 a_2)\cap P$.
Then there is an $\Cal L(P)$-formula $\psi$ such that
\begin{equation}\label{eq:lem}\tag{$\ast$}
\la \Cal M, P \ra \models \psi(b,a_1,a_2) \wedge \forall x (\psi(x,a_1,a_2) \leftrightarrow x=b).
\end{equation}
By the back and forth system for dense pairs, there is an $\Cal L$-formula $\varphi$ such that for all $c\in M$ $\dcl$-independent over $P$, and $p_0,p_1,p_2\in P$
\[
\la \Cal M, P\ra \models \psi(p_0,c,\pi(c-p_1)+p_2) \leftrightarrow \varphi(c,p_0,p_1,p_2).
\]
Then
\begin{align*}
\la \Cal M, P \ra \models \psi(x,a_1,a_2) &\leftrightarrow \exists p_1,p_2 \in P \  \pi (a_1 - p_1) = a_2-p_2 \wedge \psi(x,a_1,\pi(a_1-p_1)+p_2)\\
&\leftrightarrow \exists p_1,p_2 \in P \  \pi (a_1 - p_1) = a_2-p_2 \wedge \varphi(a_1,x,p_1,p_2)\\
& \leftrightarrow  \exists p_1,p_2 \in P \  \pi (a_1 - p_1) = a_2-p_2 \wedge \varphi(a_1,x,p_1,a_2-\pi(a_1 - p_1)).
\end{align*}
Set
\[
\chi(x_1,x_2,x_3,x_4) := \varphi(x_1,x_3,x_4,x_2-\pi(x_1 - x_3)).
\]
Therefore
\[
\la \Cal M, P \ra \models \psi(x,a_1,a_2) \leftrightarrow \exists p_1,p_2 \in P^2 \  \pi (a_1 - p_1) = a_2-p_2 \wedge \chi(a_1,a_2,x,p_1).
\]
Towards a contradiction, suppose that $b\notin\dcl(a_1 a_2)$. We can assume that for all $c\in M$
\[
\{ z \in M \ : \ \Cal M\models \chi(a_1,a_2,z,c)\}
\]
is an open interval or empty. By definable Skolem functions, we can assume that there is an $\Cal L$-$\emptyset$-definable function $f: M^3 \to M$ such that
\[
\Cal M \models \chi(x_1,x_2,x_3,x_4) \leftrightarrow x_3 = f(x_1,x_2,x_4).
\]
We now claim that $f(a_1,a_2,-)$ is constant on $[a_1-1,a_1]$. Suppose not. Then there is a subinterval $I\subseteq [a_1-1,a_1]$ such that $f(a_1,a_2,-)$ is injective on that interval. Since the set
\[
\{ p_1 \in P\cap I \ : \exists p_2 \in P \ \pi (a_1 - p_1) = a_2-p_2\}
\]
is dense in this interval, the set
\[
\{ z \in M \ : \  \exists p_1,p_2 \in P^2 \  \pi (a_1 - p_1) = a_2-p_2 \wedge z = f(a_1,a_2,p_1) \}
\]
is infinite, contradicting \eqref{eq:lem}. Thus $f(a_1,a_2,-)$ is constant on $[a_1-1,a_1]$. But then $b\in \dcl(a_1,a_2)$. Contradiction.
\end{proof}

\begin{claim}\label{claim2} Let $b \in \dcl(a_1 a_2)\cap P$. Then $b\in \Q(\pi)$.
\end{claim}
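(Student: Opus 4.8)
The goal is to prove the inclusion $\dcl(a_1 a_2)\cap P\sub\dcl(\es)=\Q(\pi)$, and the main ingredient is a normal form for $\dcl$ in the o-minimal structure $\cal R$, namely
\[
\dcl(a_1,a_2)\ \sub\ \Q a_1+\Q a_2+\Q(\pi).
\]
To see this, recall that $\cal R$ is semibounded: it is an o-minimal expansion of an ordered group by a single bounded function, and in particular it does not define unrestricted multiplication by $\pi$. Hence the one-variable functions $\cal R$ adds to the ordered $\Q$-vector space $\la M,<,+,1\ra$ are piecewise affine, where a piece with bounded domain may carry any slope in $\Q(\pi)$, but a piece with unbounded domain must have rational slope. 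A cell-decomposition and induction-on-complexity argument upgrades this to: every $\es$-definable $g:M^n\to M$ is, on each cell of a suitable $\es$-definable decomposition, affine, with rational slope in the direction of any variable in which the cell is unbounded, and with additive constant in $\dcl(\es)=\Q(\pi)$. By (1) and (2), $a_1$ and $a_2$ exceed every natural number, so $(a_1,a_2)$ lies in no bounded $\es$-definable set and therefore meets only a cell of any decomposition that is unbounded in both coordinate directions; applying the normal form there yields the displayed inclusion. (Alternatively one may simply cite the standard classification of definable functions in semibounded o-minimal structures applied to $\cal R$.)

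Granting this, let $b\in\dcl(a_1a_2)\cap P$ and write $b=q_1a_1+q_2a_2+c$ with $q_1,q_2\in\Q$ and $c\in\Q(\pi)$. Since $\la\cal M,P\ra$ is a dense pair we have $P\elsub\cal M$, so $\Q(\pi)=\dcl(\es)\sub P$; in particular $c\in P$, and hence $q_1a_1+q_2a_2=b-c\in P$ because $P$ is closed under subtraction. By hypothesis~(3), $a_1$ and $a_2$ are $\Q$-linearly independent over $P$, so $q_1=q_2=0$, whence $b=c\in\Q(\pi)$, as required. (Note that the one-variable instance of the same normal form, combined with~(3), also gives $a_2\notin\dcl(a_1)$ and, more generally, $\dcl(a_1)=\Q a_1+\Q(\pi)$, which is what the remaining claim of the appendix will need.)

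The only genuinely non-formal step is the $\cal R$-normal form of the first paragraph: its content is that the restricted multiplication by $\pi$ is of no use far out in $M$ and, at an unbounded tuple, contributes nothing beyond the $\Q$-linear structure of the ambient ordered group together with the constants already present in $\dcl(\es)=\Q(\pi)$. Everything after it is the short linear-algebra computation above, powered by condition~(3). Should the direct cell-decomposition verification be considered too delicate, the safe alternative is to route through the known description of definable sets and functions in semibounded structures.
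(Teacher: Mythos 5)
Your concluding linear algebra matches the paper exactly, and so does the overall strategy (produce a representation of $b$ as a $\Q$-affine combination of $a_1,a_2$ plus a $\Q(\pi)$-constant, then kill the coefficients using hypothesis~(3)). The gap is in the ``normal form'' step, which is false as you state it. You claim that on any cell that is unbounded in both coordinate projections, an $\emptyset$-definable function must have rational slope in each coordinate direction, and you deduce from (1)--(2) only that $(a_1,a_2)$ lies in such a cell. But consider $C=\{(x_1,x_2): x_1>0,\ 3x_1<x_2<3x_1+1\}$ and $g(x_1,x_2)=\pi(x_2-3x_1)$ on $C$: the cell $C$ has unbounded projection onto both coordinates, $g$ is $\emptyset$-definable and affine on $C$, but its slope in the $x_2$-direction is $\pi$. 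Concretely, if $b_1>n$ for all $n$ and $b_2=3b_1+t$ with $t\in(0,1)$ generic over $b_1$, then $b_1,b_2$ both exceed every natural number yet $\pi t\in\dcl(b_1 b_2)\setminus\bigl(\Q b_1+\Q b_2+\Q(\pi)\bigr)$. So ``unbounded in both coordinate directions'' is not enough; you need that no nontrivial $\Q$-affine combination $q_1 x_1+q_2 x_2$ is bounded on the relevant cell.

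What makes the argument work for the specific pair $(a_1,a_2)$ is precisely the observation that the paper proves first: $|q_1 a_1+q_2 a_2|>n$ for all $n\in\N$ whenever $q_1,q_2\in\Q$ are not both zero. This uses the full strength of~(2) (that $a_2/a_1$ is trapped strictly between every rational below $\pi$ and every rational above $\pi$), not merely that $a_1,a_2$ are infinite. With this in hand, when one evaluates an $\cal L$-term at $(a_1,a_2)$, the restricted $\pi$-multiplication is never applied to a quantity genuinely involving $a_1,a_2$ (since such quantities lie outside $[0,1]$), which is how the paper obtains $f(a_1,a_2)=d+r_1a_1+r_2a_2$ with $r_1,r_2\in\Q$ and $d\in\Q(\pi)$. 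You need to insert this observation in place of, or in support of, the cell-decomposition claim; citing the semibounded classification alone does not suffice, because the cones appearing there are defined by $\Q$-affine constraints, not coordinate boundedness, and verifying that $(a_1,a_2)$ lands in an appropriate cone requires exactly the same estimate.
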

\begin{proof}
Let $q_1,q_2 \in \Q$, not both zero, say $q_2>0$. We first observe that $|q_1 a_1 +  q_2 a_2| >n$ for all $n\in \N$. Indeed, suppose not. Then there is $n\in \Z$ such that $0 \leq q_1 a_1 + q_2 a_2 - n <1$. Thus
\[
\frac{-q_1 a_1 +n}{q_2} \leq a_2 < \frac{-q_1 a_1 +(n+1)}{q_2}.
\]
However, this contradicts (1) or (2).\newline
Now let $f : M^2 \to M$ be $\Cal L$-$\emptyset$-definable such that $f(a_1,a_2) = b$. Since $\Cal L$-definable functions are piecewise given by terms and since $q_1 a_1 +  q_2 a_2 +c \notin [0,1]$ for all $c\in \Q(\pi)$, we can find $r_1,r_2 \in \Q$ and $d\in \Q(\pi)$ such that
\[
f(a_1,a_2)  = d + r_1a_1 +r_2 a_2.
\]
However, by (3) and since $b=f(a_1,a_2)$, we get that $r_1=r_2=0$. Thus $b=d\in \Q(\pi)$.
\end{proof}

\begin{claim} $a_2\not\in \dcl(\Q(\pi) a_1)$.
\end{claim}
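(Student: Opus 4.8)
The plan is to argue directly that $a_2$ cannot be $\cal L$-definable over $\Q(\pi) \cup \{a_1\}$ in $\cal M$. Suppose towards a contradiction that $a_2 = g(a_1)$ for some $\cal L_{\Q(\pi)}$-definable function $g : M \to M$; since $a_1$ is $\dcl$-independent over $P \supseteq \Q(\pi)$ (it realizes a non-principal cut by condition (1)), we may, after shrinking to an interval around $a_1$, assume $g$ is continuous and given by an $\cal L_{\Q(\pi)}$-term on that interval. The key structural fact I would invoke is that the $\cal L$-definable functions of $\cal R = \la \R, <, +, 1, x \mapsto \pi x_{\res[0,1]}\ra$ are, piecewise, affine functions of the form $t \mapsto d + r\,t$ with $r \in \Q$ and $d \in \dcl(\text{parameters})$, \emph{away from the bounded region where the restricted multiplication-by-$\pi$ graph lives}. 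This is the same mechanism already used in the proof of Claim \ref{claim2}: since $a_1$ exceeds all the rationals and $\Q(\pi)$ consists of ``standard-sized'' elements relative to the cut defining $a_1$, every $\cal L_{\Q(\pi)}$-term evaluated near $a_1$ reduces to $d + r a_1$ with $r \in \Q$, $d \in \Q(\pi)$.

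So I would obtain $a_2 = d + r a_1$ for some $r \in \Q$ and $d \in \Q(\pi)$. Now I derive the contradiction from conditions (2) and (3). By (3), $a_1, a_2$ are $\Q$-linearly independent over $P$, hence over $\Q(\pi) \subseteq P$; but $a_2 = d + r a_1$ with $d \in \Q(\pi)$ gives the nontrivial dependence $1 \cdot a_2 - r \cdot a_1 = d \in \Q(\pi) \subseteq P$, directly contradicting (3). (Alternatively, one can use (2): if $a_2 = d + r a_1$, then for rationals $q_1 < \pi < q_2$ the inequality $q_1 a_1 < a_2 < q_2 a_1$ forces $r = \pi$, impossible since $r \in \Q$, unless $r$ is squeezed exactly to $\pi$, which it cannot be; either route closes the argument, but the (3)-route is cleaner.)

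The main obstacle is the justification that every $\cal L_{\Q(\pi)}$-definable function agrees, on a neighbourhood of $a_1$, with a map $t \mapsto d + rt$, $r \in \Q$, $d \in \Q(\pi)$ — i.e.\ that the restricted multiplication by $\pi$ never ``activates'' near $a_1$. This needs the observation that the only place the function $x \mapsto \pi x_{\res[0,1]}$ contributes a non-$\Q$-linear behaviour is inside $[0,1]$ (and its $\Q$-affine translates), all of which are bounded, whereas by (1) $a_1$ is infinitely large and by (2) so is $a_2$ (cf.\ the first paragraph of the proof of Claim \ref{claim2}, where exactly $|q_1 a_1 + q_2 a_2| > n$ is shown); hence on a neighbourhood of $a_1$ the term defining $g$ simplifies to an honest $\Q$-affine expression with constant term in the $\Q(\pi)$-span generated by $1$ and $\pi$, i.e.\ in $\Q(\pi)$. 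Once this term-analysis is in place, the remaining steps are immediate from (3). Combining this claim with Claims \ref{claim2} and the preceding one yields: any $b \sub \dcl_{\cal L(P)}(a_1 a_2) \cap P$ lies in $\dcl(a_1 a_2) \cap P \sub \Q(\pi)$, so $\dcl(b\, a_1) \sub \dcl(\Q(\pi)\, a_1) \not\ni a_2$, and therefore ($\dcl'$)$_{\{a_1\}}$ fails, as claimed.
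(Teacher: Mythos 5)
Your proposal is correct and follows the same route as the paper's proof: suppose $a_2\in\dcl(\Q(\pi)\,a_1)$, use the piecewise-term analysis of Claim~\ref{claim2} (restricted multiplication by $\pi$ never fires at the infinitely large $a_1$) to reduce to $a_2=d+ra_1$ with $d\in\Q(\pi)$, $r\in\Q$, and then contradict (3). The paper compresses this to ``by an argument as in the proof of Claim~\ref{claim2}''; you have unfolded the same argument, with only a harmless imprecision in attributing the preliminary $\dcl$-independence of $a_1$ over $\Q(\pi)$ to (1) versus over $P$.
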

\begin{proof} Suppose  $a_2\in \dcl(\Q(\pi) a_1)$.
By an argument as in the proof of Claim \ref{claim2}, there is $c\in \Q(\pi)$ and $q\in \Q$ such that $a_2 = c + q a_1.$ This contradicts (3).
\end{proof}


\begin{thebibliography}{999999}


\bibitem{beg} A. Berenstein, C. Ealy and A. G\"{u}naydin, {\em Thorn independence in the field of real numbers with a small multiplicative group}, Annals of Pure and Applied Logic 150 (2007), 1-18.

\bibitem{dms} A. Dolich, C. Miller, C. Steinhorn, {\em Structures having o-minimal open core}, Trans. AMS 362 (2010), 1371-1411.

\bibitem{dms2} A. Dolich, C. Miller, C. Steinhorn, {\em Expansions of o-minimal structures by dense independent sets}, APAL 167 (2016), 684-706.

\bibitem{vdd-nondef} L. van den Dries, {\em A generalization of the Tarski-Seidenberg theorem, and some non-definability results}, Bull. AMS 15 (1985), 189--193.

\bibitem{vdd-dense} L. van den Dries, {\em Dense pairs of o-minimal structures}, Fundamenta Mathematicae 157 (1988), 61-78.

\bibitem{vdd-book} L. van den Dries, {\sc Tame topology and o-minimal structures}, Cambridge University Press, Cambridge, 1998.


\bibitem{dg} L. van den Dries, A. G\"unayd\i n, {\em The fields of real and complex numbers with a small multiplicative group}, Proc. London Math. Soc. 93 (2006), 43-81.

\bibitem{ehk} P. Eleftheriou, A. Hasson, and G. Keren, {\em On definable Skolem functions in weakly o-minimal non-valuational structures}, Journal of Symbolic Logic 82 (2017), 1482--1495.

\bibitem{egh} P. Eleftheriou, A. G\"{u}naydin, and P. Hieronymi, {\em Structure theorems in tame expansions of o-minimal structures by dense sets}, Preprint,  arXiv:1510.03210v4 (2017).



\bibitem{ma} D. Marker, \emph{Introduction to the Model Theory of Fields}, Chapter 1 in Model Theory of Fields, 1--37, Springer-Verlag, Berlin, 1996.

\bibitem{pi} A. Pillay, {\em Some remarks on definable
equivalence relations in o-minimal structures}, J.  Symb. Logic, Volume 51, Number 3 (1986), 709-714.


\bibitem{rob} A. Robinson, {\em Solution of a problem of Tarski}, Fund. Math. 47 (1959), 79--204.

\bibitem{simon-book} P. Simon, {\sc A guide to NIP theories}, Lecture Notes in Logic, Cambridge University Press, 1st edition, 2015.

\bibitem{wen} R. Wencel, {\em On the strong cell decomposition property for weakly o-minimal structures}, Math. Log. Quart. 59 (2013), 452--470.

\bibitem{wen-nonval} R. Wencel, {\em Weakly o-minimal nonvaluational structures}, Ann. Pure Appl. Logic  154 (2013), 139--162.
\end{thebibliography}
\end{document}